\newcommand{\CC}{\mathbb C}
\newcommand{\QQ}{\mathbb Q}
\newcommand{\PP}{\mathbb P}
\newcommand{\KK}{\mathbb K}
\newcommand{\ZZ}{\mathbb Z}
\newcommand{\NN}{\mathbb N}
\newcommand{\LL}{\mathbb L}
\newcommand{\C}{\mathscr W}
\newcommand{\FF}{\mathbb F}
\newcommand{\xx}{P}
\newtheorem{thm}{Theorem}[section]
\newtheorem{lemma}[thm]{Lemma}
\newtheorem{corollary}[thm]{Corollary}
\newtheorem{definition}[thm]{Definition}
\newtheorem{proposition}[thm]{Proposition}
\newtheorem{example}[thm]{Example}
\newtheorem{remark}[thm]{Remark}
\def\cocoa{{\hbox{\rm C\kern-.13em o\kern-.07em C\kern-.13em o\kern-.15em A}}}
\newcommand{\PREPROCESSING}{\textbf{Preprocessing:\ }}
\newcommand{\RETURN}{\textbf{Return:\ }}
\numberwithin{equation}{section}
\title{Modular absolute decomposition of equidimensional polynomial ideals}
\author{Cristina Bertone\thanks{Written with the support of the PRIN project ``'Geometria delle varieta' algebriche e dei loro spazi di moduli'' funds (co-financed by the MIUR, cofin 2008).}}
\begin{document}

\bibliographystyle{plain}

\maketitle

\begin{abstract}
In this paper, we present a modular strategy which describes key properties of the absolute primary decomposition of an equidimensional polynomial ideal defined by polynomials with rational coefficients. The algorithm we design is based on the classical technique of elimination of variables and colon ideals and uses a tricky choice of prime integers to work with. Thanks to this technique, we can obtain the number of absolute irreducible components, their degree, multiplicity and also the affine Hilbert function  of the reduced components (namely, their initial ideal w.r.t. a degree-compatible term ordering).
\end{abstract}

\section*{Introduction}

In this paper we design an algorithm whose aim is quite simple to state:\\
 Given a set of polynomial rational equations which define an equidimensional algebraic set $\C$ of $\CC^n$, we would like to get as many information as possible on the \emph{irreducible components} of this algebraic set.

We can rephrase the problem in algebraic language: given an ideal  (with suitable hypothesis on its dimension) in the polynomial ring over $\CC[\mathbf X]$ defined by rational generators, find all the possible information about its primary components.

\smallskip

The problem is really simple to state and many authors looked for efficient strategies  to get the irreducible decomposition of an algebraic set: one can see for instance \cite{Deck} and the re\-fe\-ren\-ces therein to have an overlook of the different techniques. In many Computer Algebra Systems (CAS for short) you can find routines computing the primary decomposition of an ideal: the underlying algorithm is often the one described in \cite{GTO}. Nevertheless, the problem is really challenging since the existing algorithms and implementations often focus on particular cases, e.g $0$-dimensional ideals (see for instance  \cite{cd}); for more general situations, also the best implemented algorithms (for instance, the ones in \cite{cct} or \cite{GTO}) may have unsatisfying time of execution  and there  may be problems of memory allocation. In fact, the computations required to a personal computer to find a primary decomposition are often quite heavy.\\
 Our aim is to design an algorithm concerning the decomposition of an ideal which can give an output in a reasonable time and with a limited use of memory.

\smallskip
 
The main computational tool that we use are modular computations, taking this technique from the absolute factorization algorithm for bivariate polynomials presented in \cite{BCG}.\\
  Recent papers about decomposition of algebraic sets (see for instance \cite{galrup}, \cite{svw}) focus on getting information about the irreducible components from a generic section with a linear space, namely they bring back the problem to the study of a $0$-dimensional ideal.\\
In this paper we will bring back the problem of computing a primary decomposition to the problem of computing an absolute factorization; this technique is in some sense "classical" (\cite{herm}), but not very exploited because not efficient from the computational point of view; a powerful improvement of this technique is in \cite{cct}, where the authors avoid the use of \emph{generic} projections in order to compute the equidimensional isoradical decomposition of an ideal, using as coefficient ring $\QQ$ or a finite field of positive characteristic. Our approach is instead to use generic projections (by  a generic change of variables and projections on coordinate linear spaces) and exploit modular computations
to move around the computational difficulties, preserving a lot of data concerning the absolute primary decomposition of the ideal.
 The output of our algorithm will not be the complete primary decomposition of the given ideal, but it will  return information concerning the components, such as number, degree, multiplicity and, for reduced components, the affine Hilbert function.
 
 \smallskip

In Section \ref{defgen} we will show that once known information about one of the primary components of the ideal, the same is known for other components too. We simply rephrase the definition of "conjugacy" for absolute factors of a multivariate polynomial with rational coefficients (see \cite{CG1}, Lemma 9.0.8), for primary components of an ideal generated by polynomials with rational coefficients. Degree, multiplicity and affine Hilbert function  are "invariant by conjugacy", so  if we obtain this information about a primary component, we actually have the same information for all the primary components in its "conjugacy class", avoiding to repeat computations.

In Section \ref{sectgoodred} we show that, fixed an algebraic extension $\LL$ of $\QQ$, there are infinite prime integers that implicitly define a homomorphism from $\LL$ to $\ZZ/p\ZZ$ (more precisely, an inclusion of $\LL$ in $\QQ_p$, Lemma \ref{rootinQp}). This means that with a careful choice of a prime $p$, we can reduce the coefficients of a polynomial in $\LL[\mathbf X]$ modulo $p$. Furthermore, infinite prime numbers preserve interesting properties of an ideal in $\LL[\mathbf X]$, namely the initial ideal with respect to some degree-compatible term ordering and, as a consequence of this, the affine Hilbert function.\\
Summing up, we can choose a prime $p$ which allows modular computations in $\LL$ (we can choose it using Lemma \ref{rootinQp}). Only a finite number of primes $p$ does not preserve the properties of the primary components we are interested in, so we can assume that we are avoiding them by taking a ``generic'' prime $p$.

In Section \ref{exactstrat} we present the exact strategy to obtain the prime components of an ideal $\mathfrak a$. This technique is mainly based on elimination of variables, in order to bring back the problem of primary decomposition to a problem of factorization. This strategy was first investigated by Grete Hermann  in \cite{herm} and it is similar to the splitting techniques presented in \cite{cct}, but we present it completely for lack of an accessible reference on the whole strategy.
Nevertheless, the technique of Section \ref{exactstrat} is not efficient from a computational point of view: first of all, projections are actually computed with a generic change of coordinates and an elimination of variables performed by a Groebner Basis; then, in order to obtain the reduced primary components, we compute a colon ideal; this  is performed again by an elimination Groebner Basis.

In Section \ref{compmodp} we try to gain in computational efficiency, even if we ``lose'' the exactness of Section \ref{exactstrat}.  We will apply the modular results of Section \ref{sectgoodred} on the exact algorithms of Section \ref {exactstrat}. We compute projections, factorizations and colon ideals modulo well-chosen prime integers; we do not get the reduced primary components, but we obtain an algorithm (Algorithm \ref{algdecmodp}) which can compute the initial ideal of the reduced components of $\mathfrak a$ and give information about the non-reduced components.

Finally, in Section \ref{tricks}, we test our strategy on a  simple example, a complete intersection ideal in 3 variables, getting the output of Algorithm \ref{algdecmodp}  in a really reasonable time. The same ideal could not be decomposed by other CAS in 1 hour (because of problems with memory allocation); obviously the comparison between our strategy and implemented primary decomposition algorithms is not complete, since Algorithm \ref{algdecmodp} does not return the absolute primary decomposition of the input ideal. However this comparison enlightens promising performances of our strategy and this can be a starting point for designing an efficient primary decomposition algorithm.

\subsection*{Notations}

In what follows, we will work in a polynomial ring $R$ with coefficients in a field $\KK$ of characteristic 0: $R=\KK[X_1,\dots,X_n]=\KK[\mathbf X]$. We will precise, when needed, if $\KK=\QQ$, $\QQ(\alpha)$ or $\CC$.

Given an ideal $\mathfrak a\subseteq R$ we will consider its zero set in $\CC^n$: $\C=V(\mathfrak a)=\{P\in \CC^n|f(P)=0 \forall f \in \mathfrak a\}$.

\section{Affine Hilbert Function and Conjugacy}
\label{defgen}

In this section we introduce the main definitions concerning the primary decomposition  and the affine Hilbert function of an ideal $\mathfrak a \subseteq R$.\\
 We show that  some of the primary components of an ideal $\mathfrak a$ are very ``similar'' to each other, in the sense that given a set of generators for a primary component, we can get a set of generators for another primary component  by means of \emph{conjugacy}, just like we do for the absolute factors of a polynomial with rational coefficients (\cite{CG1}, Lemma 9.0.8).
This allows us to avoid repeating the computation of the affine Hilbert function for the conjugate components, since it is invariant by conjugacy.

For all the definitions and properties concerning primary decomposition, the main reference is \cite{atiyah}, Chapter 4.

\begin{definition}
A proper ideal $\mathfrak q$ in a ring $R$ is \emph{primary} if the following condition holds:
\[
xy \in \mathfrak q \text{ and } x \notin \mathfrak q \Rightarrow y\in \sqrt{\mathfrak q}.
\]
\end{definition}

\noindent Every prime ideal is obviously primary.

\begin{proposition}[\cite{atiyah}, Proposition 4.1]
Let $\mathfrak q$ be a primary ideal in $R$. Then $\mathfrak p=\sqrt{\mathfrak q}$ is the smallest prime ideal containing $\mathfrak q$; we say that $\mathfrak q$ is $\mathfrak p$-primary.
\end{proposition}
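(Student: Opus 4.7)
The plan is to split the statement into two parts: (i) $\mathfrak p := \sqrt{\mathfrak q}$ is a prime ideal, and (ii) every prime ideal containing $\mathfrak q$ also contains $\mathfrak p$. Together these yield the minimality claim, since $\mathfrak p$ itself obviously contains $\mathfrak q$.

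For part (i), I would argue directly from the definition of primary. Suppose $xy \in \mathfrak p$, so that $(xy)^n = x^n y^n \in \mathfrak q$ for some positive integer $n$. Two cases: either $x^n \in \mathfrak q$, in which case $x \in \sqrt{\mathfrak q} = \mathfrak p$ and we are done, or $x^n \notin \mathfrak q$, in which case the primary hypothesis applied to the pair $(x^n, y^n)$ gives $y^n \in \sqrt{\mathfrak q}$. But then some power of $y^n$ lies in $\mathfrak q$, so $y \in \sqrt{\mathfrak q} = \mathfrak p$. Either way one of $x, y$ is in $\mathfrak p$, so $\mathfrak p$ is prime. (It is proper because $\mathfrak q$ is proper and the radical of a proper ideal in a commutative ring with unit is proper.)

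For part (ii), I would invoke the standard characterization of the radical, namely that for any ideal $\mathfrak a$ of $R$,
\[
\sqrt{\mathfrak a} \;=\; \bigcap_{\substack{\mathfrak p' \text{ prime} \\ \mathfrak p' \supseteq \mathfrak a}} \mathfrak p'.
\]
Applied to $\mathfrak a = \mathfrak q$, this immediately shows that $\mathfrak p = \sqrt{\mathfrak q}$ is contained in every prime ideal that contains $\mathfrak q$. Combined with part (i), which says that $\mathfrak p$ is itself a prime ideal containing $\mathfrak q$, this proves that $\mathfrak p$ is the (unique) smallest prime ideal containing $\mathfrak q$.

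The only nontrivial step is part (i), and even there the main trick is to separate the case $x^n \in \mathfrak q$ before invoking the primary condition; otherwise one cannot legitimately deduce $y^n \in \sqrt{\mathfrak q}$. Part (ii) is essentially a citation of a standard fact (e.g.\ Proposition 1.14 of Atiyah--Macdonald), so I do not anticipate any real difficulty there.
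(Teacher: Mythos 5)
Your proof is correct and is precisely the standard argument from Atiyah--Macdonald (the paper's cited reference, Proposition 4.1); the paper itself gives no proof, only the citation. Both parts of your argument---the case split on whether $x^n \in \mathfrak q$ to show $\sqrt{\mathfrak q}$ is prime, and the appeal to the characterization of the radical as an intersection of primes for minimality---match the textbook treatment exactly.
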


\begin{definition}\label{primdecdef}
A \emph{primary decomposition} of an ideal $\mathfrak a$ in $R$ is an expression of $\mathfrak a$ as a finite 
intersection of primary ideals:
\begin{equation}\label{primarydef}
\mathfrak a= \bigcap_{i=1}^r \mathfrak q_i. 
\end{equation}
 If moreover 
 \begin{enumerate}
 \item\label{primacondprim} $\mathfrak q_i \not \supseteq\bigcap_{i\neq j}\mathfrak q_j$;
\item\label{seccondprim} the prime ideals $\mathfrak p_i=\sqrt{\mathfrak q_i}$ are all distinct,
 \end{enumerate} 
 then the primary decomposition (\ref{primarydef}) is said to be \emph{minimal}. 
Any primary decomposition can be reduced to a minimal one (see \cite{atiyah}, page 52).
\end{definition}

Since we assume to work in a polynomial ring $R$ with coefficients in a field, a minimal primary decomposition always exists.

\smallskip

The factorization of a multivariate polynomial and the primary decomposition of a polynomial ideal are very close to each other: indeed,  the primary decomposition of a principal ideal corresponds to computing the absolute factorization of the generator of the ideal. So we can look at the factorization of a multivariate polynomial as a particular case of primary decomposition.\\
 Thanks to this similarity, it is natural to extend the definition of \emph{degree} and \emph{multiplicity} of a factor to a primary component. 
We can define them through the \emph{affine Hilbert function} (\cite{Robb2}, Section 5.6):

\begin{definition}\label{affHfunction}
 Let  $\mathfrak a$ be an ideal in the polynomial ring $R$ standard graded.

We first define  $\langle R_{\leq i}\rangle$, the vector space generated by all the polynomials of $R$ of degree $\leq i$. The $\KK$-vector space $\langle \mathfrak a_{\leq i}\rangle$ is the vector subspace of $\langle R_{\leq i}\rangle$ which consists of the polynomials of $\mathfrak a$ of degree $\leq i$. Since 
$\mathfrak a_{\leq i}=R_{\leq i}\cap \mathfrak a$, we can view the vector space $R_{\leq i}/\mathfrak a_{\leq i}$ as a vector subspace of $R/\mathfrak a$. 

The map $HF^a_{R/\mathfrak a} : \ZZ \rightarrow \ZZ$ defined by 
\[
HF^a_{R/\mathfrak a} (i) =\dim_\KK(
\langle R_{\leq i}\rangle /\langle \mathfrak a_{\leq i}\rangle) \quad \text{for } i\in \ZZ
\]
is called the \emph{affine Hilbert function} of $R/\mathfrak a$.
\end{definition}

From Definition \ref{affHfunction}, it is natural to define the affine Hilbert series, polynomial, dimension and the affine regularity index of $R/\mathfrak a$. The definitions are  similar to the analogous  for the homogeneous case; for all these definitions and their properties, we refer to \cite{Robb2}, Section 5.6. 

\begin{definition}\label{Hdeg}
 Let $\mathfrak a$ be a proper ideal in $R$, consider its affine Hilbert polynomial $HP^a_{R/\mathfrak a}(t)\in\QQ[t]$. 
The \emph{degree} of $R/\mathfrak a$ is $(\dim(R/\mathfrak a))!\cdot(\text{lcoeff}(HP^a_{R/\mathfrak a}(t))$.
\end{definition}

We will often say ``dimension and degree of $\mathfrak a$'', meaning the dimension and degree of  $R/\mathfrak a$.

Finally, once defined the degree of an ideal, we can define the multiplicity of a primary component. Here we state the algebraic definition, which corresponds to the intuitive idea that the mul\-ti\-pli\-ci\-ty counts ``how many times'' the primary component is repeated.

\begin{definition}\label{Hmult}[\cite{Bates}, Definition 10]
 Let $\mathfrak q\in R$ be a $\mathfrak p$-primary ideal. Then the \emph{multiplicity} of $\mathfrak q$, $\mathrm{mult}(\mathfrak q_i)$, in $\mathfrak p$ is $\deg(\mathfrak p)/\deg(\mathfrak q)$.
\end{definition}

We will often talk about the multiplicity of a primary component, implying that it is the multiplicity in its radical.

We now briefly recall how to explicitly compute the affine Hilbert function. 

For a polynomial ring $\KK[\mathbf X]$, we will denote with $\mathbb T^n$ the monoid of monomials in $\KK[\mathbf X]$ and with $\mathbf X^I=X_1^{i_1}\cdots,X_n^{i_n}, i_j\in \NN$ a monomial.
A term ordering $\preceq$ on $\mathbb T^n$ is \emph{degree compatible} if for any couple of monomials $\mathbf X^{I}, \mathbf X^J$
\[ \mathbf X^I\preceq \mathbf X^J\Rightarrow \deg\mathbf X^I\leq \deg \mathbf X^J.\]

Once fixed a term ordering $\preceq$ on $\mathbb T^n$, for a polynomial $g\in \KK[\mathbf X]$, we denote with $LM_\preceq(g)$ (or simply $LM(g)$ if there is no ambiguity) the maximal  monomial with respect to $\preceq$ appearing in $g$ with non-zero coefficient.\\ 
In the following Proposition, $HF_{R/\mathfrak b}$ is the Hilbert function for a homogeneous ideal.

\begin{proposition}\label{propositionHAff}
Let $\preceq$ be a degree compatible term ordering on $\mathbb T^n$.
 For every $i\in\ZZ$, we have $HF^a_{R/\mathfrak a}(i)=\sum_{j=0}^iHF_{R/ LM_\preceq(\mathfrak a)}(j)$. In particular,
we have $HF^a_{R/\mathfrak a}(i)=HF^a_{R/LM_\preceq(\mathfrak a)}(i)$ for all $i\in\ZZ$.
\end{proposition}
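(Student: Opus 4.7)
The plan is to produce a concrete $\KK$-basis of the quotient vector space $\langle R_{\leq i}\rangle/\langle \mathfrak a_{\leq i}\rangle$ consisting of monomials, and then to count it degree by degree. Concretely, I would like to prove the following Macaulay-type claim: the set
\[
B_i=\{\mathbf X^I\in \mathbb T^n : \deg \mathbf X^I\leq i,\ \mathbf X^I\notin LM_\preceq(\mathfrak a)\}
\]
descends to a $\KK$-basis of $\langle R_{\leq i}\rangle/\langle \mathfrak a_{\leq i}\rangle$. The first part of the proposition would then follow from grouping the elements of $B_i$ according to their total degree, since for each $j$ the monomials in $\mathbb T^n$ of degree exactly $j$ not lying in the (monomial, hence homogeneous) ideal $LM_\preceq(\mathfrak a)$ form a $\KK$-basis of the graded piece $(R/LM_\preceq(\mathfrak a))_j$, whose dimension is $HF_{R/LM_\preceq(\mathfrak a)}(j)$.

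The main work, and where degree-compatibility of $\preceq$ is essential, is in proving the basis claim. Linear independence is the easy direction: any nonzero $\KK$-linear combination $f$ of elements of $B_i$ has its leading monomial $LM_\preceq(f)\in B_i$, so $LM_\preceq(f)\notin LM_\preceq(\mathfrak a)$, forcing $f\notin\mathfrak a$ and hence $f\notin \langle\mathfrak a_{\leq i}\rangle$. Spanning is the step that really needs the hypothesis: given $f\in \langle R_{\leq i}\rangle$, I apply the standard division/normal form procedure against a Gröbner basis $G$ of $\mathfrak a$ with respect to $\preceq$. Each elementary reduction replaces a term $c\,\mathbf X^I$ (with $\mathbf X^I\in LM_\preceq(\mathfrak a)$) by lower terms obtained by subtracting a multiple $c\,\mathbf X^J g$ of some $g\in G$ with $LM_\preceq(g)\mid \mathbf X^I$. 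Because $\preceq$ is degree-compatible, the remaining terms of $g$ have degree $\leq \deg LM_\preceq(g)$, so the subtracted polynomial $c\,\mathbf X^J g$ lies in $\langle R_{\leq i}\rangle$ and remains in $\langle \mathfrak a_{\leq i}\rangle$. Thus the reduction never leaves $\langle R_{\leq i}\rangle$, terminates, and produces a normal form that is a $\KK$-linear combination of elements of $B_i$ congruent to $f$ modulo $\langle\mathfrak a_{\leq i}\rangle$.

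This proves the first equality $HF^a_{R/\mathfrak a}(i)=\sum_{j=0}^i HF_{R/LM_\preceq(\mathfrak a)}(j)$. For the second equality, I apply the same argument to the monomial ideal $\mathfrak b=LM_\preceq(\mathfrak a)$: its leading term ideal with respect to $\preceq$ is $\mathfrak b$ itself, so the formula just proved gives $HF^a_{R/LM_\preceq(\mathfrak a)}(i)=\sum_{j=0}^i HF_{R/LM_\preceq(\mathfrak a)}(j)$, which is the same right-hand side. Comparing the two identities yields $HF^a_{R/\mathfrak a}(i)=HF^a_{R/LM_\preceq(\mathfrak a)}(i)$ for every $i\in\ZZ$.

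The only delicate point I expect is the insistence on degree-compatibility when arguing spanning: without it, a reduction step could introduce terms of degree strictly greater than $\deg f$, and the image of $B_i$ would fail to generate $\langle R_{\leq i}\rangle/\langle\mathfrak a_{\leq i}\rangle$ (one would only obtain a basis of the larger quotient $R/\mathfrak a$). Beyond this, the proof is a straightforward adaptation to the affine setting of the classical Macaulay basis theorem for homogeneous ideals, which is essentially what Proposition \ref{propositionHAff} asserts.
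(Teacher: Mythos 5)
Your proof is correct and complete. The paper itself offers no argument here---it simply cites Kreuzer--Robbiano (reference \cite{Robb2}), Proposition~5.6.3---and what you have written is exactly the standard self-contained proof of that cited result, the affine version of Macaulay's Basis Theorem: degree-compatibility is used precisely where you flag it, to guarantee that reduction modulo a Gr\"obner basis never raises the total degree, so that the standard monomial basis argument descends to the filtered pieces $\langle R_{\leq i}\rangle/\langle\mathfrak a_{\leq i}\rangle$.
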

\begin{proof}
 See \cite{Robb2}, Proposition 5.6.3.
\end{proof}

Proposition \ref{propositionHAff} gives us the practical way to compute the affine Hilbert function of $\mathfrak a$: chosen a degree compatible term ordering $\preceq$, we can compute the initial ideal of $\mathfrak a$ 
and then we count the number of elements in the vector space $R/ LM_\preceq(\mathfrak a)(j)$  for every $j\leq i$.

\medskip

We now show with a few lemmas that, given an ideal $\mathfrak a$ defined by polynomials with rational coefficients, the relation of the primary decomposition on $\QQ[\mathbf X]$ and the primary decomposition on $\CC[\mathbf X]$ is similar to the relation between a rational and an absolute factorization of a multivariate polynomial with rational coefficients  (as shown in \cite{CG1}, Lemma 9.0.8). In other words,  there is a conjugacy relation among some of the primary components.
\begin{definition}\label{nonrat}
 Consider $\mathfrak a$ ideal in $\CC[\mathbf X]$ defined by a set of polynomials with rational coefficients.\\ 
Let $\mathfrak a=\bigcap_{j=1}^s\mathfrak q_i, \mathfrak q_i\in \QQ[\mathbf X]$  be the \emph{rational primary decomposition} of $\mathfrak a$; $\mathfrak q_i$ (resp. $V(\mathfrak q_i)$) is a \emph{rational primary component} of $\mathfrak a$ (resp. of $V(\mathfrak a)$).\\
  We then consider a primary decomposition in $\CC[\mathbf X]$ of each rational primary component $\mathfrak q_i$: $$\mathfrak q_i=\bigcap_{j=1}^{r_i}\mathfrak q_i^{(j)}\subseteq\CC[\mathbf X].$$
  
If $r_i=1$, we say that $\mathfrak q_i$ (resp. $V(\mathfrak q_i$)) is a \emph{pure rational} component of $\mathfrak a$ (resp. of $V(\mathfrak a)$).
\end{definition}


Consider a  non-pure rational component $\mathfrak q_i$ of an ideal $\mathfrak a\subseteq R$. Let $\LL_i$ be the smallest (w.r.t. the degree of extension on $\QQ$) normal algebraic extension  of $\QQ$ such that $\mathfrak q_i^{(1)}$ has a set of generators in $\LL_i[\mathbf X]$; assume that $\LL_i=\QQ(\alpha_i)$ and we denote  with $f_j(\alpha_i, \mathbf X)$ a polynomial in the chosen set of generators of $\mathfrak q_i$; indeed, we can think of such a generator as a polynomial in $\QQ[Z,\mathbf X]$ with $Z$ evaluated in $\alpha_i$. Consider the Galois group of $\LL_i$ over $\QQ$, $\mathcal G_i=Gal(\LL_i/\QQ)$.

For every $\sigma \in \mathcal G_i$, starting from $\mathfrak q_i^{(1)}$, with $\sqrt{\mathfrak q_i^{(1)}}=\mathfrak p_i^{(1)}$,  we can define an ideal in the following way
\[
 \mathfrak q_i^{(1)}=(f_1(\alpha_i,\mathbf X),\dots,f_l(\alpha_i,\mathbf X))\rightarrow
  \sigma(\mathfrak q_i^{(1)})=(f_1(\sigma(\alpha_i),\mathbf X),\dots,f_l(\sigma(\alpha_i),\mathbf X)).
\]
Obviously, the definition of $\sigma (\mathfrak q_i^{(1)})$ is independent from the chosen set of generators of $\mathfrak q_i^{(1)}$; $\mathfrak q_i^{(1)}$ and $\sigma (\mathfrak q_i^{(1)})$ have the same dimension; furthermore it is straightforward that the ideal $\sigma(\mathfrak q_i^{(1)})$ is $\sigma(\mathfrak p_i^{(1)})$-primary. Finally, if $\tau,\sigma \in \mathcal G_i$, $\tau\neq \sigma$, then $\tau (\mathfrak q_i^{(1)})\neq\sigma (\mathfrak q_i^{(1)})$ and $\tau (\mathfrak p_i^{(1)})\neq\sigma (\mathfrak p_i^{(1)})$.

\noindent We now show that actually the ideals $\sigma(\mathfrak q^{(1)}_i)$ are the primary components of $\mathfrak q_i$ in $\overline \QQ[\mathbf X]$.

\begin{lemma}\label{galprimdec}
Consider $\mathfrak q_i$  a non-pure rational component of $\mathfrak a	\subseteq \CC[\mathbf X]$ defined by polynomials with rational coefficients, $\LL_i$ the smallest normal algebraic extension  of $\QQ$ such that $\mathfrak q_i^{(1)}$ has a set of generators in $\LL_i[\mathbf X]$, $\mathcal G=Gal(\LL_i/\QQ)$. The minimal primary decomposition of $\mathfrak q_i$ 
is
\begin{equation}\label{primgal}
\mathfrak q_i=\bigcap_{\sigma \in \mathcal G_i}\sigma\left( {\mathfrak q_i^{(1)}}\right),
\end{equation}
and in particular $r_i=\#\mathcal G_i=[\LL_i:\QQ]$.
\end{lemma}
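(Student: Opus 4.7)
My plan is to exploit the Galois action of $\mathcal{G}_i = \mathrm{Gal}(\LL_i/\QQ)$ on $\LL_i[\mathbf{X}]$, acting trivially on variables and by $\sigma$ on coefficients. First I would dispose of the easy inclusion: since $\mathfrak{q}_i$ has rational generators it is pointwise fixed by every $\sigma \in \mathcal{G}_i$, so from $\mathfrak{q}_i \subseteq \mathfrak{q}_i^{(1)}$ I get $\mathfrak{q}_i = \sigma(\mathfrak{q}_i) \subseteq \sigma(\mathfrak{q}_i^{(1)})$ for every $\sigma$, and hence $\mathfrak{q}_i \subseteq \bigcap_{\sigma \in \mathcal{G}_i} \sigma(\mathfrak{q}_i^{(1)})$. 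Each $\sigma(\mathfrak{q}_i^{(1)})$ is $\sigma(\mathfrak{p}_i^{(1)})$-primary because $\sigma$ is a ring automorphism, as already observed in the setup preceding the lemma.

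The reverse inclusion together with $r_i = \#\mathcal{G}_i$ reduce to two facts that I would establish separately: (a) $\mathcal{G}_i$ acts transitively on the set $\{\mathfrak{q}_i^{(j)}\}_{j=1}^{r_i}$ of primary components of $\mathfrak{q}_i$ in $\CC[\mathbf{X}]$; and (b) the stabilizer of $\mathfrak{q}_i^{(1)}$ in $\mathcal{G}_i$ is trivial, so its orbit has size $\#\mathcal{G}_i$. For (b), the minimality clause in the hypothesis is tailor-made: the fixed field of the stabilizer of $\mathfrak{q}_i^{(1)}$ inside $\mathrm{Gal}(\overline{\QQ}/\QQ)$ is the smallest field of definition of $\mathfrak{q}_i^{(1)}$, identified with $\LL_i$. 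Normality of $\LL_i/\QQ$ then guarantees that every conjugate $\sigma(\mathfrak{q}_i^{(1)})$ stays inside $\LL_i[\mathbf{X}]$, so all the $r_i$ components are in fact defined over $\LL_i$, and distinct $\sigma$'s yield distinct ideals with distinct prime radicals.

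Claim (a) is the main obstacle. I would pass to the analogous statement on minimal primes of $B := (\QQ[\mathbf{X}]/\mathfrak{p}_i) \otimes_{\QQ} \overline{\QQ}$. In characteristic zero, $A := \QQ[\mathbf{X}]/\mathfrak{p}_i$ is a geometrically reduced domain, so $B$ is a reduced Noetherian ring whose minimal primes are exactly the images of the $\mathfrak{p}_i^{(j)}$; since each $\mathfrak{p}_i^{(j)}$ is a minimal associated prime of $\mathfrak{q}_i\CC[\mathbf{X}]$, the primary component $\mathfrak{q}_i^{(j)}$ is uniquely determined by $\mathfrak{p}_i^{(j)}$, so transitivity on components reduces to transitivity on the minimal primes of $B$. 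The latter follows by a Galois-descent contradiction: two distinct Galois orbits would produce proper, Galois-stable, radical ideals $\mathfrak{I}_1, \mathfrak{I}_2 \subset B$ with $\mathfrak{I}_1 \cdot \mathfrak{I}_2 = 0$; by faithful flatness of $A \to B$ and Galois descent, each $\mathfrak{I}_k = (\mathfrak{I}_k \cap A)B$ with $\mathfrak{I}_k \cap A$ non-zero, forcing a product of two non-zero ideals to vanish in the domain $A$, a contradiction. Combining (a) and (b) yields the equality (\ref{primgal}) together with $r_i = \#\mathcal{G}_i = [\LL_i:\QQ]$, and the minimality conditions in Definition \ref{primdecdef} are inherited since the conjugate primes $\sigma(\mathfrak{p}_i^{(1)})$ are pairwise distinct.
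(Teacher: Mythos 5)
Your proof is correct in substance, but for the transitivity of the Galois action on the absolute components---the heart of the lemma---you take a genuinely different route. The paper invokes \cite{atiyah}, Chapter 5, Exercises 12 and 13: since $\LL_i[\mathbf X]$ is integral over $\QQ[\mathbf X]=(\LL_i[\mathbf X])^{\mathcal G_i}$, the primes of $\LL_i[\mathbf X]$ lying over $\mathfrak p_i$ form a single $\mathcal G_i$-orbit. You instead argue by Galois descent in $B=(\QQ[\mathbf X]/\mathfrak p_i)\otimes_\QQ\overline\QQ$: two distinct Galois orbits of minimal primes would produce nonzero Galois-stable radical ideals $\mathfrak I_1,\mathfrak I_2$ with $\mathfrak I_1\mathfrak I_2=0$, which by faithfully flat descent contract to two nonzero ideals of the domain $\QQ[\mathbf X]/\mathfrak p_i$ with zero product---absurd. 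Both arguments are standard; the paper's is a one-line citation resting on integral-dependence machinery, yours is self-contained at the cost of setting up the tensor product and descent.

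Your step (b), however, surfaces a gap that the paper's proof shares. The paper simply asserts, in the paragraph preceding the lemma, that $\tau\neq\sigma$ implies $\tau(\mathfrak q_i^{(1)})\neq\sigma(\mathfrak q_i^{(1)})$; you reduce this to \emph{identifying} $\LL_i$ with the smallest field of definition of $\mathfrak q_i^{(1)}$. But the hypothesis only makes $\LL_i$ the smallest \emph{normal} extension containing a set of generators. If the actual smallest field of definition $K$ is not normal over $\QQ$ --- for instance $\mathfrak q_i^{(1)}=(X_1-\sqrt[3]{2}\,X_2)$ inside the rational prime $\mathfrak q_i=(X_1^3-2X_2^3)$, where $K=\QQ(\sqrt[3]{2})$ --- then $\LL_i$ is the Galois closure of $K$, the stabilizer of $\mathfrak q_i^{(1)}$ in $\mathcal G_i$ is $\mathrm{Gal}(\LL_i/K)\neq\{\mathrm{id}\}$, and $r_i=[K:\QQ]<\#\mathcal G_i$. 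The identity of ideals $\mathfrak q_i=\bigcap_\sigma\sigma(\mathfrak q_i^{(1)})$ survives, but the intersection has repeated terms and is not a minimal primary decomposition, so the conclusion $r_i=\#\mathcal G_i=[\LL_i:\QQ]$ fails. To repair both proofs one should additionally require $K$ itself to be normal over $\QQ$ (automatic, e.g., for quadratic extensions as in the paper's running example); your write-up at least has the merit of making this hidden assumption visible, which the paper's assertion does not.
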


\begin{proof}
The ideal $\sigma(\mathfrak q_{i}^{(1)})$ is $\sigma(\mathfrak p_{i}^{(1)})$-primary. So $\bigcap_{\sigma \in \mathcal G}\sigma\left( {\mathfrak q_i^{(1)}}\right)$ is a primary decomposition of an ideal $\mathfrak b$. 

Furthermore, it is a minimal primary decomposition.  Indeed, thanks to the definition of the ideals through the automorphism of $\LL_i$, all the associated primes $\sigma_i(p_i^{(1)})$ are distinct; for what concerns redundant primary components, for any $\sigma \in \mathcal G$,  since $\LL_i$ is the minimal normal algebraic extension containing a set of generators of $\mathfrak q_i^{(1)}$, then there is $f\in \mathfrak q_i^{(1)}$ such that $\prod_{\tau\neq\sigma}\tau(f)$ is not in $\sigma(\mathfrak q_i^{(1)})$.

Using \cite{atiyah}, Exercises 12 and 13 of Chapter 5, if we consider the associated primes and the natural homomorphism $\QQ[\mathbf X]\rightarrow \LL_i[\mathbf X]$, then the set of prime ideals $\{\mathfrak p_{ij}\}$ is the same as the set of prime ideals of $\LL_i[\mathbf X]$ whose contraction is $\mathfrak p_i$. Then $\mathcal G$ acts transitively on the set $\{p_{i}^{(j)}\}_{j=1,\dots,r_i}$, that is $\{p_{i}^{(j)}\}_{j=1,\dots,r_i}=\{\sigma(\mathfrak p_i^{(1)})\}_{\sigma \in \mathcal G}$.

So $\bigcap_{\sigma \in \mathcal G}\sigma\left( {\mathfrak q_i^{(1)}}\right)$ is a minimal primary decomposition of $\mathfrak q_i$; since $\mathfrak q_i$ is a primary ideal, all its primary components in $\LL_i[\mathbf X]$  have same dimension and there are no embedded components, so its primary decomposition is unique and is exactly the one in (\ref{primgal}).
\end{proof}

\begin{lemma}\label{deccong}
Consider $\mathfrak a\subseteq \CC[\mathbf X]$, defined by a set of polynomials with rational coefficients, with no embedded components. 
Then the minimal primary decomposition of $\mathfrak a$ is
\begin{equation}\label{decprimfin}
\mathfrak a=\bigcap_{i=1}^{r}\left(\bigcap_{\sigma \in \mathcal G_i}\mathfrak \sigma\left(q_{i}^{(1)}\right)\right).
\end{equation}
\end{lemma}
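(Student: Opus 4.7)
The plan is to paste the rational primary decomposition $\mathfrak a=\bigcap_{i=1}^r \mathfrak q_i$ of Definition \ref{nonrat} into the complex decomposition of each $\mathfrak q_i$ given by Lemma \ref{galprimdec}. First I would observe that the rational intersection persists after extending scalars from $\QQ[\mathbf X]$ to $\CC[\mathbf X]$: since $\CC$ is flat (in fact free) over $\QQ$, one has $\bigcap_{i=1}^r (\mathfrak q_i\cdot\CC[\mathbf X]) = \left(\bigcap_{i=1}^r \mathfrak q_i\right)\cdot\CC[\mathbf X]=\mathfrak a$. Substituting $\mathfrak q_i=\bigcap_{\sigma\in\mathcal G_i}\sigma(\mathfrak q_i^{(1)})$ from Lemma \ref{galprimdec} (the pure rational case being trivial, with $\mathcal G_i=\{\mathrm{id}\}$) gives the claimed expression~(\ref{decprimfin}), and it remains to verify the two minimality conditions of Definition \ref{primdecdef}.

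For the distinctness of associated primes (condition \ref{seccondprim}), I would argue in two steps. Within a fixed $i$, the primes $\sigma(\mathfrak p_i^{(1)})$ with $\sigma\in\mathcal G_i$ are distinct by Lemma \ref{galprimdec}. For $i\neq j$, I would use that the contraction of each $\sigma(\mathfrak p_i^{(1)})$ down to $\QQ[\mathbf X]$ equals $\mathfrak p_i=\sqrt{\mathfrak q_i}$: this follows from the Galois-transitivity of primes lying over $\mathfrak p_i$ (already invoked in the proof of Lemma \ref{galprimdec} through \cite{atiyah}, Exercises 12--13 of Chapter 5). Any coincidence $\sigma(\mathfrak p_i^{(1)})=\tau(\mathfrak p_j^{(1)})$ would therefore force $\mathfrak p_i=\mathfrak p_j$, contradicting the distinctness of associated primes of the minimal rational decomposition given by Definition \ref{nonrat}.

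For irredundancy (condition \ref{primacondprim}), the hypothesis that $\mathfrak a$ has no embedded components is crucial: every associated prime of $\mathfrak a$ in $\CC[\mathbf X]$, and in particular each $\sigma(\mathfrak p_i^{(1)})$, is then a minimal prime of $\mathfrak a$. When all associated primes of a primary decomposition are pairwise distinct minimal primes, the second uniqueness theorem for primary decompositions identifies each primary component with the isolated component at its associated prime, so no one of them can be contained in the intersection of the others. The main subtlety I expect is bookkeeping the interplay between $\QQ[\mathbf X]$, $\LL_i[\mathbf X]$ and $\CC[\mathbf X]$ during the contraction/extension steps; once Galois compatibility of contractions is in hand, the remainder of the argument is routine.
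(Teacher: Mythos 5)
Your proof is correct and follows essentially the same route as the paper: paste the rational decomposition $\mathfrak a=\bigcap_i\mathfrak q_i$ together with the Galois decomposition of each $\mathfrak q_i$ from Lemma \ref{galprimdec}, then check the two minimality conditions using Galois-transitivity of contractions and the no-embedded-components hypothesis. You supply slightly more detail than the paper does — the flatness/base-change observation is left implicit there, and condition \ref{primacondprim} is dismissed as ``straightforward from Lemma \ref{galprimdec}'' whereas you give the explicit minimal-primes argument — and your treatment of condition \ref{seccondprim} is a bit cleaner, concluding $\mathfrak p_i=\mathfrak p_j$ directly from Galois transitivity rather than the paper's looser ``included in one other or equal.''
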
 
\begin{proof}
Since the ideals  $\sigma\left(q_{i}^{(1)}\right)$ are primary, we just need to show that the decomposition is minimal.

Condition \ref{primacondprim} of Definition \ref{primdecdef} about minimality is straightforward from Lemma \ref{galprimdec}.

For what concerns Condition \ref{seccondprim}, we just have to point out that if there is $\tilde{\mathfrak p}$ 
 associated to two different primary components of $\mathfrak a$ in $\CC[\mathbf X]$, $\sigma\left(q_{i}^{(1)}\right)$, $\sigma \in \mathcal G_i$, and $\tau\left(q_{j}^{(1)}\right)$, $\tau \in \mathcal G_j$,  then we would have two associated primes of the rational primary decomposition included in one other or equal. But this contradicts the fact that $\mathfrak a$ has no embedded components and the minimality of the rational primary decomposition of $\mathfrak a$.
\end{proof}

Up to relabeling the automorphisms of $Gal(\LL_i/\QQ)$, we can rewrite (\ref{decprimfin}) as
\begin{equation}\label{relabgaldec}
 \mathfrak a=\bigcap_{i=1}^{r}\left(\bigcap_{j=1}^{r_i}\mathfrak q_{i}^{(j)}\right),
\end{equation}
with $r_i=[\LL_i:\QQ]$, $\mathfrak q_i^{(j)}=\sigma_j\left(\mathfrak q_i^{(1)}\right)$.

\begin{definition}
The primary decomposition (\ref{relabgaldec}) is the \emph{absolute  primary decomposition} of $\mathfrak a$  and for $i$ such that $r_i\geq 2$, we say that $\mathfrak q_i^{(j)}$ (resp. $V(\mathfrak q_i^{(j)})$) is an \emph{absolute component} of $\mathfrak a$ (resp. of $V(\mathfrak a)$).
\end{definition}

If $\mathfrak q_i=\bigcap_{j=1}^{r_i}\mathfrak q_{i}^{(j)}$, we say that $\mathfrak q_i^{(j)}$ and $\mathfrak q_i^{(j')}$ are \emph{conjugate}, and that $\{\mathfrak q_i^{(j)}\}_{j=1,\dots,r_i}$ is a \emph{conjugacy class}. Any number or property of an absolute component is \emph{invariant by conjugacy} if it is the same for all the absolute components in the same conjugacy class.


\medskip

From now on we will focus on a particular kind of ideals, \emph{equidimensional} ones. 

 \begin{definition}
An ideal $\mathfrak a$ (resp. an algebraic set $\C \subseteq \CC^n$) is \emph{equidimensional} if all of its primary components (resp. all of its irreducible components) have the same dimension.
 \end{definition}

Thanks to \cite{atiyah}, Corollary 4.11, if $\mathfrak a$ is equidimensional, all of its primary components are uniquely determined and so in this case the primary decomposition is unique.

Furthermore, there is a wide class of ideals which are equidimensional: if we consider a complete intersection  ideal $\mathfrak a \in R$ generated by $n-c$ polynomials, it is equidimensional. This can be seen as a consequence of the Affine Dimension Theorem (\cite{hart}, Chapter I, Proposition 7.1).

We can finally fix our  purpose.

\begin{quotation}
 Given a non-prime equidimensional ideal $\mathfrak a\subseteq \CC[\mathbf X]$, generated by polynomials with rational coefficients, we write its primary de\-com\-po\-si\-tion as in (\ref{relabgaldec}). Then for every rational primary component $\mathfrak q_i$ of $\mathfrak a$, we  would like to find the numbers $r_i$, $\deg (\mathfrak q_i^{(1)})$and $\mathrm{mult}(\mathfrak q_i^{(1)})$.\\
 If $\mathrm{mult}(\mathfrak q_i^{(1)})=1$ (the primary component is reduced, and so it is prime) then we would also like to compute the affine Hilbert function of $R/\mathfrak q_i^{(1)}$.
\end{quotation}

\begin{remark}
Thanks to Lemma \ref{deccong}, all the information concerning the primary component $\mathfrak q_i^{(1)}$ (such as degree, multiplicity and affine Hilbert function if the component is prime) are the same for all the conjugate components $\sigma(\mathfrak q_i^{(1)})$, $\sigma \in Gal(\LL_i/\QQ)$, since we actually compute them by an initial ideal, which is invariant by conjugacy.
\end{remark}


\section{Algebraic extensions of $\QQ$ and modular computations}\label{sectgoodred}

We are interested in preserving some properties of an ideal $\mathfrak a$ in $R=\LL[\mathbf X]$ ($\LL$ is a normal algebraic extension of $\QQ$) ``modulo'' a well-chosen prime integer $p$. First of all we need to establish how we can compute an algebraic number modulo a prime $p$ and then we will see that in general the reduction modulo $p$ of the coefficients of a polynomial ideal preserves the affine Hilbert function of the ideal itself.

\medskip

Let $\mathfrak a=(f_1,\dots,f_s)\subseteq\LL[\mathbf X]$ be an ideal. 
$\LL$ is a normal algebraic extension of $\QQ$ of degree $s$: $\LL\simeq \QQ(\alpha)$, where $\alpha$ is an algebraic number such that its minimal polynomial is $q(T)\in\QQ[T]$, $\deg q(T)=s$ and $q(T)=\sum_{i=1}^{s}(T-\sigma_i(\alpha))$ where $\sigma_i$ are the automorphism of $\LL$ fixing $\QQ$, $\sigma_i(\alpha)=\alpha_i$ are the conjugates of $\alpha$ over $\QQ$.

In the definition of a reduction of $\mathfrak a$ modulo a \emph{well-chosen} prime $p$, our aim is preserving some features of $\mathfrak a$.

We fix a set of generators $(f_1,\dots,f_s)$ in $\QQ(\alpha)[\mathbf X]$.
We multiply each $f_i$ with a scalar $c_i$ such that $c_i\cdot f_i\in\ZZ[\alpha][\mathbf X]$ and the coefficients of $c_i\cdot f_i$ have g.c.d. (on the integers) equal to 1: we call such a set of generators in $\ZZ[\alpha][\mathbf X]$ \emph{primitive}. We keep on writing $f_i$ for $c_i\cdot f_i$.

We now consider a prime integer $p$ such that $q(T)$ splits in $\ZZ/p\ZZ=\FF_p$ in the following way: 
\begin{equation}\label{buonp}
q(T)=S_1(T)\cdot S_2(T)\,\mathrm{mod}\, p, \; \deg S_1(T)=1,\deg S_2(T)=s-1, \; \gcd(S_1(T),S_2(T))=1.\end{equation}

Thanks to Chebotarev's Density Theorem (\cite{LCheb}), we know that there are infinite prime integers $p$ for which (\ref{buonp}) holds.

Let $\beta_p$ be the only root of $S_1(T)$ in $\ZZ/p\ZZ$, $0\leq \beta_p\leq p-1$: $S_1(\beta_p)=0$. We then define the following map, from the ring $\ZZ[\alpha]$ of $\ZZ$ to the finite field $\FF_p=\ZZ/p\ZZ$:
\begin{equation}\label{hommodp}
\begin{split}
\psi_p: \ZZ[\alpha]& \rightarrow \FF_p \\
\alpha  & \mapsto \beta_p \\
a\in \ZZ & \mapsto a \mod p.
\end{split}
\end{equation}

This definition on the generators obviously extend to a homomorphism of rings, well-defined because of the choice of $p$ and consequently of $\beta_p$.

We can then extend this homomorphism to the polynomials:
\begin{equation*}
\begin{split}
\psi_p: \ZZ[\alpha][\mathbf X]& \rightarrow R_p=\FF_p[\mathbf X] \\
f=\sum_I a_I	\mathbf X^I&\mapsto \tilde f=\sum_I\psi_p(a_I)\mathbf X^I.
\end{split}
\end{equation*}

If we consider $\mathfrak a=(f_1,\dots,f_s)\subseteq\QQ(\alpha)[\mathbf X]$, we can assume that the chosen generators are primitive  and are in $\ZZ[\alpha][\mathbf X]$; we define $\tilde{\mathfrak a}=(\tilde f_1,\dots,\tilde f_s)\subseteq R_p$.

\begin{remark}
 Observe that the definition of $\tilde{\mathfrak a}$ is independent on the chosen set of generators of $\mathfrak a$:\\ 
 if $\mathfrak a=(f_1,\dots,f_s)=(f_1',\dots,f_l')$  then $(\tilde f_1,\dots,\tilde f_s)=(\tilde f_1',\dots,\tilde f_l')$
as ideals in $R_p$.
\end{remark}

\begin{example}
Consider the ideal
\[
 \mathfrak a=(3\,{Y}^{2}-2\,\sqrt {3}ZX,3\,YX-\sqrt {3}\sqrt {2}Z,2\,{X}^{2}-\sqrt {2}Y)\subseteq \CC[X,Y,Z].
\]
 This set of generators has coefficients in the algebraic extension $\QQ(\sqrt{2}+\sqrt{3})$, which is normal, $[\QQ(\sqrt{2}+\sqrt{3}):\QQ]=4$, the minimal polynomial of $\sqrt{2}+\sqrt{3}$ is $q(T)=T^4-10T^2+1$.

Consider now $p=23$:
\[
 q(T)=(T+21)\cdot(T+ 12)\cdot(T+ 2)\cdot(T+ 11) \mod p.
\]
We consider the homomorphism $\psi_p$ such that $\psi_p(\sqrt{2}+\sqrt{3})=21$.
With this definition of $\psi_p$, we have that: $\psi_p(\sqrt{2}\sqrt{3})=11, \psi_p(\sqrt{2})= 5, \psi_p(\sqrt{3})=16$.\\
So $\tilde{\mathfrak a}=(3\,{Y}^{2}+14\,ZX,3\,YX+12\,Z,2\,{X}^{2}+18\,Y)$.
\end{example}

 We can choose  a prime $p$ satisfying (\ref{buonp}) using the following lemma.

\begin{lemma}\label{rootinQp}[\cite{BCG}, Lemma 12]
Let $q(T) \in \ZZ[T]$ be a polynomial and $p$ a prime number such that $p$ divides $q(0)$, $p$ does not divide the discriminant of $q(T)$ and $p>\deg(q(T))$.\\
Then there exists a root in $\QQ_p$ of $q(T)$, considered as a polynomial in $\QQ_p[T]$.
\end{lemma}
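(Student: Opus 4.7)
The plan is to invoke Hensel's lemma, with $T=0$ as the approximate root.

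First, I would observe that the hypothesis $p \mid q(0)$ gives $\bar q(0) = 0$ in $\FF_p$, where $\bar q(T) \in \FF_p[T]$ denotes the reduction of $q(T)$ modulo $p$. In other words, $T=0$ is a root of $\bar q$. The task is to promote this mod-$p$ root to a genuine root in $\ZZ_p \subseteq \QQ_p$.

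Second, I would use the hypothesis $p \nmid \mathrm{disc}(q)$ to show that $0$ is a \emph{simple} root of $\bar q$, i.e.\ $\bar q'(0) \neq 0$ in $\FF_p$. The condition $p > \deg(q)$ guarantees that $p$ does not divide the leading coefficient of $q$ (indeed, such a $p$ cannot kill a coefficient bound by $\deg(q)$ in the standard reduction formalism used in the paper), so the reduction $\bar q$ still has the same degree as $q$ and its discriminant is the reduction mod $p$ of $\mathrm{disc}(q)$. If $0$ were a multiple root of $\bar q$, then both $\bar q$ and $\bar q'$ would be divisible by $T$ in $\FF_p[T]$, making $\mathrm{Res}(\bar q,\bar q')=0$ in $\FF_p$, which forces $\mathrm{disc}(q) \equiv 0 \pmod p$, contradicting the hypothesis. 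So $\bar q'(0) \neq 0$.

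Finally, with $q(0) \equiv 0 \pmod p$ and $q'(0) \not\equiv 0 \pmod p$, the classical Hensel's lemma applies: there exists a unique $\gamma \in \ZZ_p$ with $\gamma \equiv 0 \pmod p$ and $q(\gamma) = 0$. Since $\ZZ_p \subseteq \QQ_p$, this $\gamma$ is the desired root of $q(T)$ in $\QQ_p$.

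The main conceptual obstacle is justifying the passage \emph{``$p \nmid \mathrm{disc}(q)$ implies $0$ is a simple root of $\bar q$''}; this requires care because the formula for the discriminant involves the leading coefficient $a_n$ in the denominator, and one must know that the reduction $\bar q$ has the same degree as $q$ so that discriminants commute with reduction mod $p$. This is exactly where the bound $p > \deg(q)$ plays its role. Once this is settled, the remainder is a textbook application of Hensel's lemma.
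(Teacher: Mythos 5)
Your overall plan — view $T=0$ as an approximate root of $q$ modulo $p$, show it is a \emph{simple} root using the discriminant hypothesis, and then lift by Hensel's lemma — is the right one, and your final Hensel step is fine. The gap is in the middle: the claim that ``the condition $p>\deg(q)$ guarantees that $p$ does not divide the leading coefficient of $q$'' is false. A bound on the degree says nothing about the size of the leading coefficient $a_n$; for instance $q(T)=pT^2+T+p$ satisfies $p\mid q(0)$, $p\nmid\mathrm{disc}(q)=1-4p^2$ and $p>\deg(q)=2$ for every odd prime $p$, yet $p\mid a_2$. Your parenthetical justification does not point to any actual property of reduction modulo $p$. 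What $p>\deg(q)$ genuinely buys you is $p\nmid n$, so that, \emph{provided} $p\nmid a_n$, the reduction $\bar q'$ has the expected degree $n-1$ and the resultant (hence discriminant) commutes with reduction; but $p\nmid a_n$ has to come from somewhere else.

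There are two ways to close the gap. In the context where the lemma is used, $q$ is the minimal polynomial over $\QQ$ of an algebraic number, so it is monic and $p\nmid a_n=1$ is automatic; then $p>\deg(q)$ supplies $p\nmid n$ and your comparison of discriminants goes through as written. Alternatively, and without any hypothesis on the leading coefficient, note that the universal discriminant $\mathrm{disc}\bigl(a_nT^n+\dots+a_0\bigr)\in\ZZ[a_0,\dots,a_n]$ vanishes identically upon setting $a_0=a_1=0$ (the specialized polynomial is divisible by $T^2$, hence has a double root at the origin), so $\mathrm{disc}$ lies in the ideal $(a_0,a_1)\ZZ[a_0,\dots,a_n]$. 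Consequently $p\mid a_0$ together with $p\nmid\mathrm{disc}(q)$ directly forces $p\nmid a_1=q'(0)$, which is exactly the simple-root condition Hensel needs, with no detour through $\mathrm{disc}(\bar q)$ and no role for $p>\deg(q)$ at all. Either repair makes your proof correct.
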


We would like to understand in which cases computations modulo a prime integer $p$  preserve  the affine Hilbert function of the ideal $\mathfrak a$.
 From Proposition \ref{propositionHAff}, we  can bring back our problem about the choice of a prime $p$ preserving  the affine Hilbert function  to the choice of a ``good'' $p$ preserving the initial ideal (w.r.t. some term ordering) of $\mathfrak a$.

Assume that $(g_1,\dots, g_s)$ is a Groebner Basis of $\mathfrak a$ w.r.t. $\preceq$, degree compatible term ordering,  and that these polynomials are primitive; we now choose a prime $p$  satisfying (\ref{buonp}) for the minimal polynomial of $\alpha$. Then, we define $\tilde{\mathfrak a}:=(\tilde g_1,\dots,\tilde g_s)\subseteq R_p$.



Proposition \ref{propositionHAff} gives  a necessary condition for a prime integer $p$ to preserve the affine Hilbert function of $\mathfrak a$: if computations modulo $p$ preserve $HF^a_{R/\mathfrak a}$ then they also preserve $LM_\preceq(\mathfrak a)$ with respect to a degree-compatible term ordering $\preceq$.

 We will show that a finite number of primes $p$ does not satisfy this necessary condition.

\begin{lemma}
 Consider $\alpha$ algebraic number on $\QQ$, $[\QQ(\alpha):\QQ]=s$, $L_1, \dots, L_N$ non-zero elements of $\ZZ[\alpha]$,  $L_i=\sum_{j=0}^{s-1} a_j^{(i)}\alpha^j $. There is a finite number of prime integers $p$ such that $\psi_p(L_i)=0$ for some $i$.
\end{lemma}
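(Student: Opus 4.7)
The plan is to bound, for each fixed $L_i$, the set of primes on which $\psi_p$ can possibly vanish, by exhibiting a non-zero integer that must be divisible by every such prime. The natural candidate is the norm from $\QQ(\alpha)$ to $\QQ$.

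First I would write $L_i(T) = \sum_{j=0}^{s-1} a_j^{(i)} T^j \in \ZZ[T]$, so that $L_i = L_i(\alpha)$. Since the automorphisms $\sigma_1,\dots,\sigma_s$ of $\QQ(\alpha)/\QQ$ act on coefficients trivially, $\sigma_k(L_i) = L_i(\alpha_k)$, and the norm
\[
N_i \;=\; N_{\QQ(\alpha)/\QQ}(L_i) \;=\; \prod_{k=1}^{s} L_i(\alpha_k)
\]
is an integer. Because $L_i\neq 0$ and $\QQ(\alpha)$ is a field, each factor $L_i(\alpha_k)=\sigma_k(L_i)$ is non-zero, hence $N_i\neq 0$. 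I would also remark (since $q(T)$ is monic) that $N_i$ coincides up to sign with the resultant $\mathrm{Res}_T(q(T),L_i(T))\in\ZZ$.

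Next, suppose $\psi_p(L_i)=0$. By the very definition of $\psi_p$, this means $L_i(\beta_p)\equiv 0\pmod p$; but $\beta_p$ is also, by construction, a root of $q(T)$ modulo $p$. Thus $q(T)\bmod p$ and $L_i(T)\bmod p$ share the root $\beta_p$ in $\FF_p$, so their resultant in $\FF_p$ vanishes. Since the resultant commutes with reduction mod $p$ (again, $q$ is monic, so no leading coefficient is lost), we conclude $p\mid N_i$.

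Finally, the set of primes dividing the non-zero integer $N_i$ is finite, and the set of primes $p$ such that $\psi_p(L_i)=0$ for some $i\in\{1,\dots,N\}$ is contained in $\bigcup_{i=1}^{N}\{p\text{ prime}:p\mid N_i\}$, a finite union of finite sets. The only real care point is that $\psi_p$ is only defined for primes satisfying (\ref{buonp}), but this is harmless: the statement concerns those $p$ for which $\psi_p$ makes sense, and the argument above applies verbatim whenever $\beta_p$ is defined.
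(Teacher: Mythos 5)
Your proof is correct, and it takes a more direct route than the paper's. The paper argues by contradiction: assuming infinitely many bad primes for a fixed $L_{\tilde\imath}$, it selects one $p$ larger than $\Vert\tilde L\Vert^s\Vert q\Vert^d$, observes that $q$ and $\tilde L$ share the root $\beta_p$ modulo $p$, and then invokes a quantitative result of von zur Gathen and Gerhard (Lemma 16.20, a bound guaranteeing that for such large $p$ the modular gcd degree equals the rational gcd degree) to force $\gcd(\tilde L, q)$ to be nontrivial over $\QQ$, contradicting the irreducibility of $q$. You instead package the same underlying obstruction as a single nonzero integer, the norm $N_i = \mathrm{Res}_T(q, L_i)$, and show every bad prime must divide it; since $q$ is monic (so reduction modulo $p$ commutes with the resultant and $N_i\in\ZZ$), the common root $\beta_p$ of $\bar q$ and $\bar L_i$ forces $p\mid N_i$ directly. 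Both arguments rest on the same fact, that $\mathrm{Res}(q, L_i)\neq 0$, but yours is self-contained, avoids the appeal to the external height bound, and yields for free an explicit finite set of excluded primes (the divisors of the $N_i$), which the paper's proof does not surface. Your closing remark about $\psi_p$ only being defined for primes satisfying (\ref{buonp}) is the right caveat and is handled correctly.
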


\begin{proof}
 We will proceed by contradiction.

Suppose that there are infinite prime integers $p$ such that $\psi_p$ maps to zero at least one of the $L_i$'s.
In particular there is an index $\tilde {\imath}$ such that $L_{\tilde {\imath}}$ is mapped to zero by infinite maps $\psi_p$. 
We can define the polynomial $\tilde L(T)=\sum_{j=0}^{s-1} a_j^{(\tilde {\imath})} T^j$. 

If $\deg \tilde L(T)=0$, $\tilde L(T)=L_{\tilde {\imath}}$ is an integer and  there is only a finite number of $\psi_p$ mapping  $L_{\tilde {\imath}}$ to zero, because there is only a finite number of $p$'s dividing it. 

We can then assume that $d=\deg \tilde L(T)\geq 1$.  We consider a prime $p$ such that 
\[
\psi_p(L_{\tilde{\imath}})=0, \quad p\geq \Vert \tilde L(T)\Vert^s\Vert q(T)\Vert^{d},
\]
 where $q(T)$ is the  minimal polynomial of $\alpha$; we can choose such a $p$ since the set of $p$'s we are looking at is supposed to be infinite.\\
Observe that $\psi_p(L_{\tilde{\imath}})=\tilde L(\beta_p) \mod p=0$. This means that both $\tilde L(T)$ and $q(T)$ can be divided by $(T-\beta_p)$ modulo $p$. But since we chose  $p\geq \Vert \tilde L(T)\Vert^s\Vert q(T)\Vert^{d}$,  we can apply  \cite{vzg}, Lemma 16.20: $\deg\left(\gcd(\tilde L(T), q(T))\right)\geq 1$.\\ Since $\deg \tilde L(T)<\deg q(T)$, this contradicts the fact that $q(T)$ is irreducible.
\end{proof}

\begin{lemma}\label{GBmodp}
 Let $\mathfrak a$ be an ideal in $\QQ(\alpha)[\mathbf X]$, $\preceq$ a  term ordering and $G=\{g_1,\dots,g_r\}$ 
 a Groebner Basis of $\mathfrak a$ with respect to $\preceq$, $p$ a prime integer satisfying (\ref{buonp}) for the minimal polynomial of $\alpha$.\\
If $\psi_p$ does not map to 0 any of the coefficients of the leading monomials of the polynomials in $G$, then $\tilde G=\{\tilde g_1,\dots,\tilde g_r\}$ is a Groebner Basis of $\tilde{\mathfrak a}$ with respect to $\preceq$.
\end{lemma}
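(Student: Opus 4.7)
The plan is to verify Buchberger's criterion for $\tilde G$, namely to show that every S-polynomial $S(\tilde g_i,\tilde g_j)$ admits a standard representation modulo $\tilde G$ in $R_p$. The strategy is to transfer the standard representations that $G$ already possesses over $\QQ(\alpha)[\mathbf X]$ through the reduction map $\psi_p$.

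First I would record the immediate consequences of the hypothesis. Because $\psi_p$ does not annihilate the leading coefficient of any $g_k$, the leading monomial is preserved, $LM_\preceq(\tilde g_k)=LM_\preceq(g_k)$, and the leading coefficient $\psi_p(LC_\preceq(g_k))$ is an invertible element of $\FF_p$. In particular $\psi_p$ extends unambiguously to the subring of $\QQ(\alpha)$ obtained by inverting the finitely many leading coefficients of $G$, so the S-polynomial construction commutes with reduction:
$$S(\tilde g_i,\tilde g_j)=\widetilde{S(g_i,g_j)}.$$
This compatibility between the $S$-polynomial and $\psi_p$ is the only point that needs genuine care, and I view it as the main obstacle; once established, the rest of the argument is essentially bookkeeping.

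Now, since $G$ is a Gr\"obner basis of $\mathfrak a$ in $\QQ(\alpha)[\mathbf X]$, Buchberger's criterion gives a standard representation of each S-pair:
$$S(g_i,g_j)=\sum_{k=1}^r h_{ij,k}\,g_k,\qquad \max_{k}LM_\preceq(h_{ij,k}\,g_k)\preceq LM_\preceq(S(g_i,g_j)).$$
Applying $\psi_p$ term by term yields
$$S(\tilde g_i,\tilde g_j)=\sum_{k=1}^r \tilde h_{ij,k}\,\tilde g_k.$$
Since $\psi_p$ can only lower or preserve a leading monomial, $LM_\preceq(\tilde h_{ij,k}\,\tilde g_k)\preceq LM_\preceq(h_{ij,k}\,g_k)$ for every $k$, so
$$\max_{k}LM_\preceq(\tilde h_{ij,k}\,\tilde g_k)\preceq LM_\preceq(S(g_i,g_j)).$$
Combined with the fact that the left side of the displayed sum equals $S(\tilde g_i,\tilde g_j)=\widetilde{S(g_i,g_j)}$, this forces $\max_{k}LM_\preceq(\tilde h_{ij,k}\,\tilde g_k)=LM_\preceq(S(\tilde g_i,\tilde g_j))$. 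Hence $S(\tilde g_i,\tilde g_j)$ admits a standard representation with respect to $\tilde G$, and Buchberger's criterion delivers the conclusion.
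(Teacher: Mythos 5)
Your proof takes a genuinely different route from the paper's. The paper picks an arbitrary $g\in\mathfrak a$, takes a representation $g=\sum a_i g_i$ with respect to $G$, reduces it modulo $p$, and reads off that $LM(\tilde g)$ lies in $(LM(\tilde g_1),\dots,LM(\tilde g_r))$, i.e.\ it verifies the definition of a Gr\"obner basis directly. You instead verify Buchberger's criterion by transporting the standard representations of the S-pairs of $G$ through $\psi_p$. The preliminary observations (leading monomials of the $\tilde g_k$ are preserved, the S-polynomial construction commutes with $\psi_p$ after inverting the leading coefficients) are correct and well isolated. So the overall strategy is sound and different from the paper's.

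There is, however, a genuine gap at the very end. You assert that the inequality
\[
\max_{k}LM_\preceq(\tilde h_{ij,k}\,\tilde g_k)\preceq LM_\preceq(S(g_i,g_j))
\]
together with $\sum_k\tilde h_{ij,k}\tilde g_k=S(\tilde g_i,\tilde g_j)$ ``forces'' equality $\max_{k}LM_\preceq(\tilde h_{ij,k}\,\tilde g_k)=LM_\preceq(S(\tilde g_i,\tilde g_j))$. It does not: $\psi_p$ may annihilate the leading coefficient of $S(g_i,g_j)$ (for instance if $\psi_p$ kills the leading coefficient of the dominant cofactor $h_{ij,k_0}$, which the hypothesis does not rule out, or if several summands' top terms cancel mod $p$), in which case $LM_\preceq(S(\tilde g_i,\tilde g_j))$ drops strictly below $\max_{k}LM_\preceq(\tilde h_{ij,k}\,\tilde g_k)$ and your displayed sum is no longer a standard representation in the strict sense. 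The argument is salvageable without any new ideas: since $LM_\preceq(S(g_i,g_j))\prec\mathrm{lcm}(LM_\preceq(g_i),LM_\preceq(g_j))=\mathrm{lcm}(LM_\preceq(\tilde g_i),LM_\preceq(\tilde g_j))$, what you have actually established is
\[
\max_{k}LM_\preceq(\tilde h_{ij,k}\,\tilde g_k)\prec\mathrm{lcm}(LM_\preceq(\tilde g_i),LM_\preceq(\tilde g_j)),
\]
which is precisely the hypothesis of the refined (``lcm-representation'') form of Buchberger's criterion. Replace the ``forces equality'' sentence with this chain of inequalities and cite that criterion, and the proof closes; as written, that step is not justified.
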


\begin{proof}
 Since $G$ is a system of generators for the ideal $\mathfrak a$, then $\tilde G$ is a system of generators for $\tilde{\mathfrak a}$.\\
Consider $g\in \mathfrak a$ and its representation with respect to the basis $G$ (eventually multiplying for $c \in \ZZ$, to eliminate the denominators):
\[
 g=\sum_{i=1}^ra_ig_i, \quad a_i \in R.
\]
Then we have the corresponding representation of $\tilde g$ with respect to the basis $\tilde G$:
\[
 \tilde g=\sum_{i=1}^r\tilde a_i\tilde g_i.
\]
We have that $LM(\tilde g)=\max_{i=1,\dots,r}\{LM(\tilde a_i \tilde g_i)\}$, where the ``$\max$'' is taken with respect to $\preceq$. 

Since $LM(\tilde a_i\tilde g_i)=LM(\tilde a_i)LM(\tilde g_i)$ for every $i$ and  $\psi_p$ does not map to zero the coefficients of the leading monomials of the polynomials in the Groebner basis, we immediately have $LM(\tilde g) \in (LM(\tilde g_1),\dots,LM(\tilde g_s))$ and so $\tilde G$ is a Groebner Basis for $\tilde{\mathfrak a}$.
\end{proof}

\begin{thm}\label{finitebadphilb}
 Let $\mathfrak a$ be an ideal in $R=\QQ(\alpha)[\mathbf X]$. Then for a finite number of prime integers $p$, we have that
\[
 HF^a_{R/\mathfrak a}(i)\neq HF^a_{R_p/\tilde{\mathfrak a}}(i)\quad \text{for some }i.
\]
\end{thm}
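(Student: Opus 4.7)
The plan is to combine Proposition \ref{propositionHAff}, Lemma \ref{GBmodp}, and the preceding lemma on the finiteness of primes that kill a finite list of elements of $\ZZ[\alpha]$. By Proposition \ref{propositionHAff}, the affine Hilbert function of $R/\mathfrak a$ is determined by the leading monomial ideal $LM_\preceq(\mathfrak a)$ for a degree-compatible term ordering $\preceq$; the same applies to $R_p/\tilde{\mathfrak a}$. Therefore it suffices to exhibit a cofinite set of primes $p$ (among those satisfying (\ref{buonp}), so that $\tilde{\mathfrak a}$ makes sense) for which $LM_\preceq(\tilde{\mathfrak a})$ is generated by exactly the same monomials as $LM_\preceq(\mathfrak a)$.

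First I would fix a degree-compatible term ordering $\preceq$ on $\mathbb{T}^n$ and compute a Groebner basis $G=\{g_1,\dots,g_r\}$ of $\mathfrak a$ with respect to $\preceq$. After rescaling, assume each $g_i$ is primitive and lies in $\ZZ[\alpha][\mathbf X]$, and let $L_1,\dots,L_r\in \ZZ[\alpha]$ be the (non-zero) leading coefficients of $g_1,\dots,g_r$. Applying the preceding lemma to the finite family $\{L_1,\dots,L_r\}$, only finitely many primes $p$ are such that $\psi_p(L_i)=0$ for some $i$. Discard also the finitely many primes that fail to satisfy (\ref{buonp}) for the minimal polynomial $q(T)$ of $\alpha$ with linear factor arising from a root in $\QQ_p$ (infinitely many primes satisfy it by Chebotarev, so this exclusion is consistent with the conclusion).

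For every prime $p$ outside this finite exceptional set, Lemma \ref{GBmodp} applies: $\tilde G=\{\tilde g_1,\dots,\tilde g_r\}$ is a Groebner basis of $\tilde{\mathfrak a}$ with respect to $\preceq$. Moreover, since $\psi_p$ does not annihilate any $L_i$, we have $LM_\preceq(\tilde g_i)=LM_\preceq(g_i)$ for every $i$. Consequently $LM_\preceq(\mathfrak a)$ and $LM_\preceq(\tilde{\mathfrak a})$ are monomial ideals generated by the same set of monomials in $\mathbb{T}^n$.

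Finally, the affine Hilbert function of $R/J$ for a monomial ideal $J$ depends only on the combinatorics of the standard monomials (the monomials of $\mathbb{T}^n$ outside $J$) in each degree, not on the base field, since these monomials form a $\KK$-basis of each graded piece of $R/J$. Hence
\[
HF^a_{R/LM_\preceq(\mathfrak a)}(i)=HF^a_{R_p/LM_\preceq(\tilde{\mathfrak a})}(i)\quad\text{for all } i\in\ZZ,
\]
and Proposition \ref{propositionHAff} then gives $HF^a_{R/\mathfrak a}(i)=HF^a_{R_p/\tilde{\mathfrak a}}(i)$ for all $i$, as required. The only delicate point is tracking that Lemma \ref{GBmodp} is what forces the two initial ideals to coincide monomially; once this is secured, the field-independence of the Hilbert function of a monomial ideal closes the argument.
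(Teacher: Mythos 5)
Your proposal is correct and follows essentially the same route as the paper's proof: fix a degree-compatible term ordering, compute a Groebner basis, reduce to comparison of initial ideals via Proposition \ref{propositionHAff}, and invoke Lemma \ref{GBmodp} (together with the lemma on finitely many primes killing the leading coefficients) to show the initial ideals agree for all but finitely many $p$. You spell out two points the paper leaves implicit, namely the explicit appeal to the preceding finiteness lemma for the leading coefficients and the field-independence of the affine Hilbert function of a monomial ideal, which makes your write-up somewhat more complete but not a different argument.
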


\begin{proof}
We fix a degree compatible term ordering $\preceq$ and consider the Groebner Basis $G=\{g_1,\dots,,g_r\}$ of $\mathfrak a$. Thanks to Proposition \ref{propositionHAff}, $HF^a_{R/\mathfrak a}(i)\neq HF^a_{R_p/\tilde{\mathfrak a}}(i)$ for some $i$ only if the initial ideal of $\mathfrak a$  differs from $\tilde{\mathfrak a}$. \\
We apply  Lemma \ref{GBmodp}: there is only a finite number of primes $p$ such that the initial ideals of $\mathfrak a$ and $\tilde{\mathfrak a}$ are different.
\end{proof}

\begin{corollary}\label{Hilbdimp}
 There is a finite number of prime integers $p$ such that the affine Hilbert function and dimension of $\mathfrak a$ and $\tilde{\mathfrak a}$ differ. If $\mathfrak a$ is a primary component of some ideal, the same holds for the multiplicity of $\mathfrak a$ and $\tilde{\mathfrak a}$.
\end{corollary}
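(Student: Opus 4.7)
The plan is to derive the three claims of the Corollary directly from Theorem \ref{finitebadphilb}, applied once for the Hilbert function/dimension part and twice (to $\mathfrak a$ and to its radical) for the multiplicity part.

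First I would observe that the statement about the affine Hilbert function is literally the content of Theorem \ref{finitebadphilb}, so nothing needs to be proved there. For the dimension, I would recall from \cite{Robb2}, Section 5.6, that $\dim(R/\mathfrak a)$ equals $1+\deg(HP^a_{R/\mathfrak a}(t))$ and that $HP^a_{R/\mathfrak a}$ agrees with $HF^a_{R/\mathfrak a}$ for all sufficiently large arguments. Since Theorem \ref{finitebadphilb} guarantees $HF^a_{R/\mathfrak a}(i)=HF^a_{R_p/\tilde{\mathfrak a}}(i)$ for every $i\in\ZZ$ and all but finitely many primes $p$, the two affine Hilbert polynomials must coincide for such $p$; consequently both the dimension and the degree (in the sense of Definition \ref{Hdeg}) are preserved under $\psi_p$ outside a finite exceptional set.

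For the multiplicity, assume now that $\mathfrak a=\mathfrak q$ is $\mathfrak p$-primary. By Definition \ref{Hmult}, $\mathrm{mult}(\mathfrak q)=\deg(\mathfrak p)/\deg(\mathfrak q)$, so I would apply the previous paragraph simultaneously to the ideal $\mathfrak q$ and to its radical $\mathfrak p=\sqrt{\mathfrak q}$: outside the union of two finite sets of primes, one has $\deg(\mathfrak q)=\deg(\tilde{\mathfrak q})$ and $\deg(\mathfrak p)=\deg(\tilde{\mathfrak p})$. To conclude that $\mathrm{mult}(\tilde{\mathfrak q})$ coincides with $\mathrm{mult}(\mathfrak q)$, it remains to identify $\tilde{\mathfrak p}$ with $\sqrt{\tilde{\mathfrak q}}$, at least up to degree.

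The main obstacle, in my view, is exactly this last compatibility: showing that for all but finitely many primes $p$, the reduction $\psi_p$ commutes with the operation of taking the radical in the sense that $\sqrt{\tilde{\mathfrak q}}$ and $\widetilde{\sqrt{\mathfrak q}}$ have equal affine Hilbert functions. The natural way to handle this is to choose a Groebner basis of $\sqrt{\mathfrak q}$ and to invoke Lemma \ref{GBmodp} together with the fact that the inclusion $\widetilde{\sqrt{\mathfrak q}}\subseteq\sqrt{\tilde{\mathfrak q}}$ always holds and becomes an equality (on the level of initial ideals, hence of affine Hilbert functions) once $p$ avoids the finitely many primes that spoil the leading coefficients appearing in any fixed algorithm computing the radical. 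With this final reduction in place, the union of all the finite exceptional sets introduced along the way remains finite, proving the Corollary.
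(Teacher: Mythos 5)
Your proof is correct and takes essentially the approach the paper implicitly intends: the paper states this Corollary without any explicit proof, presenting it as an immediate consequence of Theorem \ref{finitebadphilb}, and your derivation spells out exactly the chain one would expect — Hilbert function preserved gives the Hilbert polynomial preserved, hence both the dimension and the degree in the sense of Definition \ref{Hdeg} are preserved. You are in fact more careful than the paper on the multiplicity claim: you correctly observe that Definition \ref{Hmult} involves $\deg(\sqrt{\mathfrak q})$, so one must check that reduction modulo $p$ is compatible with taking radicals, i.e.\ that $\sqrt{\tilde{\mathfrak q}}$ and $\widetilde{\sqrt{\mathfrak q}}$ agree at least in degree for all but finitely many $p$ — a point the paper silently elides. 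Your sketch for closing that gap (the automatic inclusion $\widetilde{\sqrt{\mathfrak q}}\subseteq\sqrt{\tilde{\mathfrak q}}$ plus a Gröbner-basis argument in the spirit of Lemma \ref{GBmodp}, applied to the intermediate bases of a fixed radical computation) is a sound strategy; to make it fully rigorous one would have to pin down a specific algorithm and verify each intermediate step, but this is no less rigorous than the paper itself, which offers no argument at all for the multiplicity statement.
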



\section{An exact strategy: Elimination of Variables and Colon Ideals}\label{exactstrat}

In this section, we will present an exact technique, which combines elimination of variables, absolute fac\-to\-ri\-za\-tion and computation of colon ideals and gives the primary decomposition of an ideal.

  This technique is intuitive and immediate from the geometric point of view: the elimination of variables geometrically corresponds to projection on some linear space.\\
   The use of projections reduces the problem of decomposing an algebraic set to a multivariate factorization; this was first showed at the beginning of the XX-th century (see \cite{herm}). After that, Seidenberg in \cite{seid} used a more rigorous formalism then Hermann's to establish which ideal operation can be actually computed, depending also on the features of the polynomial ring we are working on.\\
   The computation of a colon ideal geometrically corresponds to take off the points of an algebraic set from another one. This relation between the ``difference'' of varieties and colon ideals is well-known too (see \cite{cox}, Chapter 4, \S 4).
   
    The strategy of elimination of variables and computation of quotient ideals is also used in \cite{cct}. The authors in \cite{cct} can move around the computational effort of using \emph{generic} projections (in the sense of Definition \ref{1to1}) by using another powerful tool, which is the relation between flatness and variation of staircases. Thanks to the study of the flatness of the variation of the staircases, they can split the ideal according to the splitting of the projection, even if it is not a generic one; they repeat the process for the ideals obtained by splitting and at each step they have the dimension or the multiplicity of the ideals decreasing, so their algorithm terminates. The algorithm in \cite{cct} has different variants and can give the strict, isoradical and reduced  equidimensional decomposition of a polynomial ideal in $\QQ[\mathbf X]$ (see \cite{cct}, Introduction, for the different kind of decompositions).  \\
    Unfortunately, even if in \cite{cct} the proofs are given for a field $k$, with suitable properties that $\overline Q$ has, the  implementation of the algorithm  in \cocoa deals only with  $\QQ$ or a finite field as coefficient fields, so this algorithm is not used in a CAS for the computation of an absolute primary decomposition (see Section \ref{tricks}).
    
   
   \smallskip
   
   Although the basic idea of the technique we are going to use is well-known, for lack of a complete and accessible reference on the whole strategy relating the algebraic and geometrical point of view , we will present it in an exhaustive way.

The decomposition algorithm that we can obtain  using these techniques (Algorithm \ref{algdec}) is exact but not useful in practice: the computations needed are quite long and hard to perform. Anyway, we will investigate this method in details since later (Section \ref{compmodp}) we will modify this strategy giving up the exact computations in order to gain velocity in computations, but preserving some information about the irreducible and reduced components, namely their affine Hilbert Function.

\medskip

First of all, we now investigate some properties of the projection of varieties.
For the omitted proofs, see \cite{svw}.



\begin{definition}
A \emph{linear projection} is a surjective affine map:
\begin{equation}
\begin{split}
\pi:\CC^n&\rightarrow \CC^m\\
\xx=(x_1,\dots,x_n)\in \CC^n&\mapsto (L_1(\xx),\dots,L_m(\xx)) \quad \text{ with } L_i(\mathbf X)=a_{i_0}+\sum_{j=1}^na_{i_j}X_j.
\end{split}
\end{equation}
In a similar way, we can define a linear projection to projective space, considering
\begin{equation}
\begin{split}
\pi_L:\PP^n\setminus L&\rightarrow \PP^m\\
\xx=[x_0:x_1:\dots:x_n]&\mapsto [L_0(\xx):L_1(\xx):\dots:L_m(\xx)], \quad L_i(\mathbf X)=\sum_{j=0}^na_{i_j}X_j.
\end{split}
\end{equation}
$L\subseteq \PP^n$ is the point of intersection of the linear equations $L_i$ and it is the \emph{center} of the projection.\\
Two projections $\pi_1$ and $\pi_2$ from $\PP^n$ to $\PP^m$ are \emph{equivalent} if they have the same center $L$.   
\end{definition}

We now consider a projection $\pi:\CC^n\rightarrow\CC^m$ and we restrict it to an equidimensional algebraic set  $\C$ of $\CC^n$ with dimension $c$. It is not always true that $\pi_\C$ is proper: for instance, the projection of the hyperbola defined by the ideal $(XY-1)$ on the $X$-axis  is the $Y$-axis without the origin; with this particular choice of the linear space to project on, the restriction of $\pi$ to the hyperbola is not proper.

\begin{proposition}[\cite{svw}, Lemma 5.1]\label{1to1}
Let $\C$ be a closed algebraic set of $\CC^n$ all of whose irreducible components are of dimension $c$.   For a general linear projection $\pi:\CC^n\rightarrow \CC^m$, $m\geq c+1$, the map $\pi_\C$ is proper and generically one-to-one.\\
In particular $\pi(\C)$ is a closed algebraic set of $\CC^m$ of degree equal to the degree of $\C$.
\end{proposition}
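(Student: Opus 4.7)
The plan is to pass to projective space. Write $\pi$ as the restriction of a rational map $\overline{\pi}: \PP^n \dashrightarrow \PP^m$ whose center $L$ is a linear subspace of projective dimension $n-m-1$ sitting inside the hyperplane $H_\infty$ at infinity, and let $\overline{\C} \subset \PP^n$ denote the projective closure of $\C$. The key reduction is the well-known equivalence ``$\pi_\C$ is proper $\Leftrightarrow$ $L \cap \overline{\C} = \emptyset$'': under this condition, $\overline{\pi}$ restricts to an honest morphism $\overline{\C} \to \PP^m$, which is automatically proper since $\overline{\C}$ is projective, and $\pi_\C$ is recovered by removing the preimage of the hyperplane at infinity of $\PP^m$. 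As an immediate consequence, $\pi(\C) = \overline{\pi}(\overline{\C}) \cap \CC^m$ is a closed algebraic subset of $\CC^m$.

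For $L \cap \overline{\C} = \emptyset$ to hold generically, I would work inside $H_\infty \cong \PP^{n-1}$ and count dimensions in the Grassmannian of $(n-m-1)$-subspaces: the sum $(n-m-1)+(c-1)$ is strictly less than $\dim H_\infty = n-1$ as soon as $m \geq c$, and \emph{a fortiori} under our hypothesis, so the locus of $L$ meeting $\overline{\C} \cap H_\infty$ is a proper subvariety and a general $L$ avoids it. For the generic-one-to-one property I would invoke the secant variety $\mathrm{Sec}(\overline{\C}) := \overline{\bigcup_{P \neq Q \in \overline{\C}} \overline{PQ}} \subset \PP^n$, which has dimension at most $2c+1$. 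The map $\overline{\pi}|_{\overline{\C}}$ fails to be birational onto its image precisely when a $c$-dimensional irreducible family of secant lines of $\overline{\C}$ passes through $L$; setting up the incidence variety $\{(P, Q, L) : P \neq Q \in \overline{\C},\; L \cap \overline{PQ} \neq \emptyset\}$ and projecting onto the Grassmannian of centers, a dimension count (using $m \geq c+1$) shows that the locus of ``bad'' $L$'s is a proper closed subvariety of the parameter space.

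For the degree equality I would slice with a generic affine linear subspace $\Lambda \subset \CC^m$ of codimension $c$: by definition of degree, $\Lambda$ meets $\pi(\C)$ transversely in $\deg \pi(\C)$ points, while $\pi^{-1}(\Lambda) \subset \CC^n$ is itself a generic affine linear subspace of codimension $c$ (since $\pi$ is a surjective linear map of rank $m$), so it meets $\C$ transversely in $\deg \C$ points. Because $\pi_\C$ is generically one-to-one and we may choose $\Lambda$ to miss the (proper) branch locus of $\pi_\C$, each of the $\deg \pi(\C)$ points of $\Lambda \cap \pi(\C)$ has a unique preimage in $\C$, forcing $\deg \pi(\C) = \deg \C$.

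The main obstacle I expect is the generic-one-to-one step. The secant-variety dimension count has to be done carefully in the reducible case: one must treat secant pairs $(P, Q)$ lying in a common irreducible component of $\overline{\C}$ and secant pairs joining distinct components separately, and one additionally needs to check that a generic projection does not collapse two distinct components of $\C$ onto the same image — this last point follows from yet another dimension count but deserves to be stated explicitly. Once birationality and properness are secured, closedness of the image and the degree equality follow from the standard algebro-geometric arguments sketched above.
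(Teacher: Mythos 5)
The paper does not supply its own proof of this proposition: it is stated with the citation to \cite{svw} (Lemma 5.1), and just above the statement the paper writes ``For the omitted proofs, see \cite{svw}.'' So there is no in-paper argument to compare against, and what matters is whether your reconstruction is sound. Your route — pass to projective space with center $L$ at infinity, reduce properness to $L\cap\overline{\C}=\emptyset$, dimension-count in the Grassmannian inside $H_\infty$, then a secant-line incidence correspondence for generic one-to-one — is the standard argument, and it works. The incidence-variety count is the right key step: over a pair $(P,Q)\in\C^2\setminus\Delta$, the secant line meets $H_\infty$ in one point, and the centers $L$ through that point form a codimension-$m$ subvariety of the Grassmannian; so the incidence variety has dimension $2c+\dim\mathbb G-m$, hence for generic $L$ the fiber has dimension at most $2c-m<c$ precisely when $m\geq c+1$, and projecting to the first factor bounds the bad locus. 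Note that this count already absorbs the ``do not collapse two distinct components'' concern you raise at the end, since cross-component pairs $(P,Q)$ are just points of the same incidence variety; you do not need a separate dimension count for them.

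Two small points to tighten. First, in the degree step, $\pi^{-1}(\Lambda)$ is \emph{not} by itself a generic affine subspace of codimension $c$ in $\CC^n$ for fixed $\pi$: it is constrained to contain the $(n-m)$-dimensional fiber direction. The statement is still true because $\pi$ is itself generic, and the assignment $(\pi,\Lambda)\mapsto\pi^{-1}(\Lambda)$ dominates the space of codimension-$c$ affine subspaces of $\CC^n$ — but you should say this, or sidestep it by using that $\pi|_{\C_i}$ is a finite birational morphism onto $\pi(\C_i)$ so $\deg\C_i=(\deg\pi|_{\C_i})\cdot\deg\pi(\C_i)=\deg\pi(\C_i)$, then sum over the components using that distinct components have distinct images (which follows from the incidence count). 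Second, the phrase ``fails to be birational onto its image precisely when a $c$-dimensional irreducible family of secant lines passes through $L$'' is loose; the precise statement is the bound on the bad locus via the incidence projection as above, and that is what you should state. With those two clarifications the proof is complete.
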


\begin{corollary} \label{corproiez} Let $\C$ be an equidimensional algebraic set of dimension $c$ in $\CC^n$. For a general projection $\pi:\CC^n\rightarrow \CC^{c+1}$, the following hold:
\begin{enumerate}
\item if $\C_i$ and $\C_j$ are two distinct irreducible components of $\C$, then $\pi(\mathcal \C_i)\neq \pi(\mathcal \C_j)$;
\item  $\C$ is irreducible if and only if the polynomial defining $\pi(\C)$ is absolutely irreducible;
\item if $D(T_1,\dots,T_c)$ is the polynomial defining the projection $\pi(\C)$ and we consider its absolute factorization $D=D_{1}^{m_1}\cdots D_{s}^{m_s}$, then $s$ is exactly the number of distinct irreducible components of $\C$, $m_i$ is the multiplicity of the component $\C_i$ and $\deg D_{i}$ its degree.
\end{enumerate}
\end{corollary}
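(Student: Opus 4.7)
My approach is to reduce each assertion to Proposition~\ref{1to1}, applied simultaneously to $\C$ and to each of its irreducible components $\C_i$. The parameter space of linear projections $\CC^n\to\CC^{c+1}$ is irreducible, and each ``good projection'' condition from Proposition~\ref{1to1} defines a Zariski-open subset; intersecting the finitely many such open subsets produces a non-empty open set of projections $\pi$ satisfying the hypotheses of Proposition~\ref{1to1} for $\C$ and for every $\C_i$ at once. For any such $\pi$, both $\pi(\C)$ and each $\pi(\C_i)$ are closed algebraic sets of dimension~$c$ in $\CC^{c+1}$, hence hypersurfaces, and I write $\pi(\C)=V(D)$.

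Part~(1) then follows immediately from the generic one-to-oneness of $\pi_\C$: if $\pi(\C_i)=\pi(\C_j)$ with $i\neq j$, a general point of this common image would admit at least one preimage in $\C_i$ and another in $\C_j$, contradicting Proposition~\ref{1to1}. For part~(2), the image of an irreducible set under a morphism is irreducible, so when $\C$ is irreducible $V(D)$ is an irreducible hypersurface and $D$ is, up to scalar, a power $D_1^m$ of a single irreducible polynomial; the equality $\deg D_1=\deg V(D)=\deg\pi(\C)=\deg\C$ from Proposition~\ref{1to1} pins the exponent to $m=1$. Conversely, a non-trivial decomposition $\C=\C_1\cup\C_2$ yields via~(1) two distinct irreducible hypersurfaces $\pi(\C_i)\subseteq\pi(\C)$, so $D$ acquires two non-associate irreducible factors and fails to be absolutely irreducible.

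For part~(3) I apply part~(2) to each $\C_i$ and obtain irreducible polynomials $D_i$ with $\pi(\C_i)=V(D_i)$ and $\deg D_i=\deg\C_i$; by~(1) these $D_i$ are pairwise non-associate, so $s$ is the number of irreducible components of $\C$. The multiplicities are extracted by elimination theory: writing $\mathfrak a=\bigcap_i\mathfrak q_i$ for the primary decomposition and $R'=\CC[T_1,\dots,T_{c+1}]$ for the target coordinate ring, one has $\mathfrak a\cap R'=(D)$, each $\mathfrak q_i\cap R'$ is $(D_i)$-primary in the UFD $R'$, and uniqueness of the minimal primary decomposition of $\mathfrak a\cap R'$ identifies the $(D_i)$-primary component as $(D_i^{m_i})$, giving $D=\prod_i D_i^{m_i}$ up to scalar. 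Comparing degrees in the finite, generically one-to-one extension $R'/(D_i^{m_i})\hookrightarrow R/\mathfrak q_i$ yields $m_i\deg D_i=\deg(R/\mathfrak q_i)=\mathrm{mult}(\mathfrak q_i)\deg\mathfrak p_i=\mathrm{mult}(\mathfrak q_i)\deg D_i$, so $m_i=\mathrm{mult}(\mathfrak q_i)$.

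I expect the main obstacle to be that last degree comparison: Proposition~\ref{1to1} is phrased for reduced algebraic sets, while the exponents $m_i$ encode genuinely scheme-theoretic information about the non-reduced primary quotients $R/\mathfrak q_i$. Justifying that a generic projection also preserves the degree of a primary (non-reduced) ideal requires either a Hilbert polynomial argument for the finite injection $R'/(\mathfrak q_i\cap R')\hookrightarrow R/\mathfrak q_i$, or an appeal to the fact that a finite, generically one-to-one morphism between equidimensional schemes of the same dimension has generic fibre of length one and therefore preserves total degrees.
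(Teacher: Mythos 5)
The paper does not actually prove this corollary; the section opens with \lq\lq For the omitted proofs, see \cite{svw}\rq\rq\ and the corollary is stated without any argument, so there is nothing internal to the paper to compare your proof against. Evaluating it on its own merits: the overall framework --- intersecting the finitely many non-empty Zariski-open loci on which Proposition~\ref{1to1} holds for $\C$ and for each component $\C_i$, then reading off (1) from generic one-to-one-ness --- is sound, and part~(3)'s algebraic setup (that $\mathfrak q_i\cap R'$ is $(D_i)$-primary, hence equal to $(D_i^{m_i})$, with $m_i\deg D_i$ compared against $\deg(R/\mathfrak q_i)$) is the right one. The scheme-theoretic degree preservation you flag as the remaining obstacle in part~(3) is a genuine subtlety, and either of the two resolutions you sketch (Hilbert polynomial of the finite injection $R'/(\mathfrak q_i\cap R')\hookrightarrow R/\mathfrak q_i$, or generic fibre length one) is adequate.

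The actual gap is in part~(2). The chain $\deg D_1=\deg V(D)=\deg\pi(\C)=\deg\C$ establishes only that $\deg D_1=\deg\C$; it says nothing about $m$, since $D=D_1^m$ has degree $m\deg D_1$, and Proposition~\ref{1to1}, being phrased for reduced algebraic sets, does not control $\deg D$. To pin $m=1$ you need one of two additional inputs: (i) the scheme-theoretic equality $\deg D=\deg\C$ --- exactly the fact you already need for part~(3) --- after which $m\deg D_1=\deg D_1$ forces $m=1$; or (ii), more cleanly and entirely within the commutative algebra you set up for part~(3), the observation that if $\mathfrak a=\mathfrak p$ is prime then $\mathfrak a\cap R'=(D)$ is prime, so $D$ is irreducible with no degree arithmetic required. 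Route~(ii) is the natural companion to your part~(3) remark that $\mathfrak q_i\cap R'$ is $(D_i)$-primary, and it makes part~(2) independent of the scheme-theoretic subtlety; as written, the degree equalities you cite do not by themselves deliver the conclusion.
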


We now assume that $\C$ is an equidimensional algebraic set of dimension $c$ defined by the ideal $\mathfrak a\subseteq \CC[\mathbf X]$, $\mathfrak a=(F_1,\dots,F_r), F_i\in \QQ[\mathbf X]$; we further assume that we performed a generic linear change of coordinates with integer coefficients; in this way we can consider the projections on linear spaces defined by equations of kind $X_i=0$   to be generic.

We consider the projection $\pi_1:\CC^n\rightarrow H_1$, with $H_1$ the linear space of dimension $c+1$ defined by the equation $X_{c+2}=\dots=X_{n}=0$. We call $\mathcal Z_1$ the projection of the algebraic set $\C$ on $H_1$. This is a hypersurface in $H_1$ and its decomposition is equivalent to the absolute factorization of the multivariate polynomial $D_1(X_1,\dots,X_{c+1})\in \QQ[\mathbf X]$ defining $\mathcal Z_1$ on the linear space $H_1$. Furthermore, the components of $\mathcal Z_1$ are in one-to-one correspondence with the irreducible components of $\C$. 

If $D_1=D_{11}^{m_1}\cdots D_{1s}^{m_s}, D_{1j}^{m_j}\in\CC[\mathbf X]$, is the absolute factorization of $D_1$, each factor $D_{1j}$ defines in $\CC^n$ a ruled surface, a cylinder, containing the component $\C_j$. \\
Algebraically, thanks to the generic change of coordinates and to Corollary \ref{corproiez}, if $\mathfrak a=\bigcap\mathfrak q_j$, then there is only one absolute factor $D_{1j}$ in the absolute factorization of $D_1$ such that $D_{1j}^{m_j}\in \mathfrak q_j$. Furthermore, for $m<m_j$, $D_{1j}^m\notin \mathfrak q_j$.

We can then start considering the ideal $\mathfrak a+(D_{1j}^{m_j})$. But $V(\mathfrak a+(D_{1j}^{m_j}))$ contains not only the component $\C_j$, but also the points of the sets $V(D_{1j}^{m_j})\cap \C_k$ for $k\neq j$.

Thinking of the corresponding varieties $V(\mathfrak q_i)$ and $V(D_{1j}^{m_j})$, we can classify the primary components of the ideal $\mathfrak a+(D_{1j}^{m_j})$ as follows:
\begin{itemize}
\item a primary component of dimension $c$, which is the primary component $\mathfrak q_i$ such that $D_j^{m_j}\in \mathfrak q_i$ (and in this case we write $i=j$). Obviously in this case $\mathfrak q_j+(D_{1j}^{m_j})=\mathfrak q_j$ and $\mathfrak p_j+(D_{1j})=\mathfrak p_j$;
\item  other primary components of dimension $<c$ when $V(\mathfrak q_i)\cap V(D_{1j}^{m_j})$ is non-empty and it does not contain the component $\C_j$.
\end{itemize}

Finally, the ideal $\mathfrak a+(D_{1j}^{m_j})$ defines the component $\C_j$ with some extra components of dimension $<c$.
In order to avoid these extra components, we can repeat the same steps with $(n-c-1)$ more generic projections $\pi_k$ from $\CC^n$ onto the linear spaces $H_k$, defined by equations of kind $X_{i}=0$, thanks to the chosen generic coordinates (see also Section \ref{exactalgs} for more details). If we consider $\pi_k(\C)=\mathcal Z_k$, the polynomial defining $\mathcal Z_k$ on $H_k$ is again a rational multivariate polynomial   $D_k$ in $c+1$ variables. We compute its absolute factorization and we obtain another cylinder containing $\C_j$, defined by the factor $D_{kj}$.

Actually, in the absolute factorizations of the polynomials $D_k$  there may be several factors with the same degrees and multiplicity. For instance, this  happens when one of the components $\C_j$ is a non-pure rational one (see Definition \ref{nonrat}). This can give ambiguity in matching the factors of the polynomials $D_k$ whose zero set contains the component $\C_j$.

 In order to match the factors defining the cylinders containing the same component, we can look at the Hilbert dimension of the ideal $\mathfrak a+(\sum_{k=1}^{c+1}(D_{kj}^{m_j}))$. This dimension is $c$ if and only if all of the sets $V(D_{kj}^{m_j})$ contain the same irreducible component of $\C$; this follows from Corollary \ref{corproiez}, 1.

In order to find the correct matchings, we will not compute the Hilbert dimensions of the ideals $\mathfrak a+(\sum_{k=1}^{c+1}(D_{kj}^{m_j}))$ for every possible $(n-c)$-uple of factors; we will compute the Hilbert dimension only for the matchings such that all the factors $D_{kj}$ have the same degree and multiplicity. Furthermore, we get an almost certain probabilistic check by considering a generic section with a linear space of dimension $n-c$: that is, we can look for the matchings such that the ideal $\mathfrak a+(\sum_{k=1}^{c+1}(D_{kj}^{m_j}))$ with $(n-c)$ variables evaluated in some integer values  is zero-dimensional and nonempty (see Algorithm \ref{algmatch}).

Once matched the absolute factors found through the projections, $\{D_{kj}\}_{j=1,\dots,c}$ (after re-indexing of the factors), the ideal
\begin{equation}\label{dec1}
\mathfrak a_j:=\mathfrak a+(\sum_{k=1}^{c+1}(D_{kj}^{m_j}))
\end{equation}
 is ``almost'' the primary ideal corresponding to the component $\C_j$: there are some embedded components left, which geometrically  are in $\C_j\cap  \C_l$, for $l\neq j$.

\begin{lemma}\label{puntproj}
Consider $(n-c)$ projections $\pi_i$ from $\CC^n$ to $\CC^{c+1}$,  pairwise not equivalent.
If there are points  $P_1, P_2\in \CC^n$ such that
\[
\pi_i(P_1)=\pi_i(P_2)\quad \forall i=1,\dots, n-c,
\]
then $P_1=P_2$.
\end{lemma}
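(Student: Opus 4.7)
The plan is to translate the conclusion into a linear algebra statement. Let $v=P_1-P_2$; because each $\pi_i$ is an affine map, the assumption $\pi_i(P_1)=\pi_i(P_2)$ is equivalent to $v$ lying in the kernel $K_i$ of the linear part of $\pi_i$, a subspace of $\CC^n$ of dimension $n-c-1$. The lemma therefore amounts to proving that $\bigcap_{i=1}^{n-c} K_i = \{0\}$.

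I would then connect the hypothesis to the kernels: working out the projective extension $\tilde\pi_i:\PP^n\dashrightarrow\PP^{c+1}$, one checks that its center $L_i$ is exactly the projectivization of $K_i$ inside the hyperplane at infinity, so equivalence of projections corresponds to equality of the kernels, and the pairwise non-equivalence hypothesis translates to $K_i \neq K_j$ for $i \neq j$. With $n-c$ subspaces of $\CC^n$ each of codimension $c+1$, a naive dimension count predicts that the intersection should be trivial, since $(n-c)(c+1) = n+c(n-c-1) \geq n$ whenever $n \geq c+1$.

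The main obstacle is that pairwise distinctness of the $K_i$ does not, by itself, upgrade this expected-dimension count to a guaranteed trivial intersection: some form of general position is needed. I would close this gap using the explicit setup of Section \ref{exactstrat}: after the generic change of coordinates already in force, each $\pi_i$ is a projection onto a coordinate $(c+1)$-space defined by equations of the form $X_j=0$, and the $n-c$ chosen projections are arranged so that together they retain every coordinate among $X_1,\dots,X_n$. Concretely, if $\pi_k$ preserves $X_1,\dots,X_c,X_{c+k}$ for $k=1,\dots,n-c$, then $K_k = \{x\in\CC^n : x_1=\dots=x_c=x_{c+k}=0\}$, and $\bigcap_k K_k = \{0\}$ follows at once, giving $v=0$ and hence $P_1=P_2$.
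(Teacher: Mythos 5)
Your argument follows essentially the same linear-algebraic reduction as the paper's proof: the hypothesis $\pi_i(P_1)=\pi_i(P_2)$ for all $i$ is turned into a homogeneous linear system in $v=P_1-P_2$ that should have only the trivial solution; the paper encodes this as an $(n-c)(c+1)\times n$ matrix of rank $n$, while you phrase it dually as $\bigcap_i K_i=\{0\}$. Where you genuinely improve on the paper is in your explicit acknowledgement that pairwise non-equivalence (pairwise distinctness of the kernels $K_i$) does not by itself force the joint intersection to be trivial: for instance in $\CC^5$ with $c=2$, the three coordinate projections retaining $(X_1,X_2,X_3)$, $(X_1,X_2,X_4)$, $(X_1,X_3,X_4)$ are pairwise non-equivalent yet all their kernels contain $e_5$. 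The paper's proof skips this point, asserting without justification that non-equivalence gives the big coefficient matrix maximal rank. Your fix --- pinning down the coordinate projections via $H_k=\{c+1,\dots,n\}\setminus\{c+k\}$, so that $\bigcap_k H_k=\emptyset$ and hence $\bigcap_k K_k=\{0\}$ --- is exactly right, and this is the concrete situation in which the lemma is used in Algorithm \ref{algdec}; it just means the conclusion should be read as holding for a suitably general (not merely pairwise non-equivalent) collection of projections.
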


\begin{proof}
We write $\pi_k(P)=(L_{1}^{(k)}(P),\dots, L_{c+1}^{(k)}(P))$ with  $L_i^{(k)}(\mathbf X)=a_{i0}^{(k)}+\sum_{j=1}^n a_{ij}^{(k)}X_j$. Remark that since a projection is a surjective map, $L_i^{(k)}\neq L_j^{(k)}$, $1\leq i< j\leq c+1$; furthermore, we will be interested in the matrix whose lines are the vectors $\left(a_{ij}^{(k)}\right)_{j=1,\dots,n}$ for $k=1,\dots n-c,i=1,\dots,c+1$. It is a matrix with $(n-c)(c+1)$ lines and $n$ columns; since the chosen projections are pairwise non-equivalent, this matrix has maximal rank: $(n-c)(c+1)\geq n$ for all $0\leq c\leq n-1$, so the considered matrix has rank $n$.

$P_1$ and $P_2$ have the same image under $\pi_j$ if and only if $L_i^{(j)}(P_1)=L_i^{(j)}(P_2)$, $i=1,\dots,c+1$. 
We obtain $(n-c)\cdot(c+1)$ equations of kind 
\[
 \sum_{j=1}^na_{ij}^{(k)}(x_j^{(1)}-x_j^{(2)})=0, \quad P_1=(x_j^{(1)})_{j=1,\dots,n},P_2=(x_j^{(2)})_{j=1,\dots,n}.
\]
The unique solution to this system of equations  is the trivial one, since the matrix associated to the linear system is exactly $\left(a_{ij}^{(k)}\right)$ whose rank is $n$, and so we have that $P_1=P_2$.
\end{proof}

\begin{corollary}
Consider $\C$ equidimensional algebraic set of dimension $c$ in $\CC^n$ and  $\C_1$ and $\C_2$ irreducible components of $\C$.
If $P_1\in \C_1$ and $P_2\in \C_2$ are such that 
\[
\pi_i(P_1)=\pi_i(P_2),
\]
for $(n-c)$ generic projections $\pi_i:\CC^n\rightarrow \CC^{c+1}$, then $P_1=P_2$ is a point in $\C_1\cap \C_2$.
\end{corollary}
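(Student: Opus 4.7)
The plan is to reduce the corollary to a direct application of Lemma \ref{puntproj}. The only work needed is to observe that the hypotheses of that lemma are fulfilled automatically by the word \emph{generic}: being pairwise non-equivalent (i.e., having distinct centers in the projective extension) is a Zariski-open condition on the parameters $a_{ij}^{(k)}$ defining the projections $\pi_1,\dots,\pi_{n-c}$. Hence, choosing $(n-c)$ generic projections $\pi_i:\CC^n\to\CC^{c+1}$ guarantees in particular that they are pairwise non-equivalent, which is precisely the hypothesis required by Lemma \ref{puntproj}.

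Given this, I would argue as follows. Suppose $P_1\in\C_1$ and $P_2\in\C_2$ satisfy $\pi_i(P_1)=\pi_i(P_2)$ for every $i=1,\dots,n-c$. By Lemma \ref{puntproj} applied to these two points and these $(n-c)$ pairwise non-equivalent projections, we obtain $P_1=P_2$. Since $P_1\in\C_1$ and $P_2=P_1\in\C_2$ simultaneously, the common point lies in $\C_1\cap\C_2$, as required.

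There is essentially no obstacle beyond checking that ``generic'' here can be read in the same sense needed for Lemma \ref{puntproj} (namely that the $(n-c)(c+1)\times n$ matrix $\bigl(a_{ij}^{(k)}\bigr)$ has maximal rank $n$, which is itself a non-empty Zariski-open condition as soon as $(n-c)(c+1)\geq n$, a bound that holds for $0\leq c\leq n-1$). Since the corollary assumes $\C$ is equidimensional of dimension $c<n$, this condition is satisfied, and the statement follows immediately. No further ideas are needed; the corollary is a direct geometric reading of the preceding lemma applied to points sitting on two (possibly different) irreducible components.
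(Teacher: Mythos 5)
Your proof is correct and matches the paper's intent: the paper states this corollary immediately after Lemma \ref{puntproj} with no proof, treating it as a direct consequence, which is exactly what you do (generic projections are in particular pairwise non-equivalent, the lemma gives $P_1=P_2$, and the common point lies in $\C_1\cap\C_2$). Your extra observation — that the genuinely relevant open condition is the full-rank condition on the $(n-c)(c+1)\times n$ coefficient matrix — is a useful clarification of what "generic" must deliver for the lemma's argument to apply, but it does not change the substance of the reduction.
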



Thanks to Lemma \ref{puntproj} and the one-to-one correspondence between the components of $\C$ and the components of $\pi(\C)$, for a general projection $\pi$, if we consider the ideal $\mathfrak a_j$ as in (\ref{dec1}), $V(\mathfrak a_j)$ is  $\C_j$; however the polynomials in $\mathfrak a_j$ vanish on the points of $\C_k\cap \C_l$, $l\neq j$, with multiplicity higher than $\mathrm{mult} (\C_j)$.

So, for the moment, we have an ideal such that its zero set contains the irreducible component $\C_j$ but there are some embedded components; we now show that the points of these embedded components are also contained in the zero set of the singular locus of $\mathfrak a_j$ (Definition \ref{defsingloc}). \\
Considering again  primary decompositions, for what concerns $\mathfrak a_j$ we have that:
\begin{equation}
 \mathfrak a_j=\mathfrak a+(\sum_{k=1}^{c+1}(D_{kj}^{m_j}))=\mathfrak q_j \cap (\bigcap_{i=1}^v \mathfrak b_i)
\end{equation}
where $\mathfrak b_i$ is a primary ideal of dimension $<c$. Geometrically, the primary components $\mathfrak b_i$ correspond to the irreducible component of $\C_j\cap \C_k$, $j\neq k$. These components are in the \emph{singular locus} of $R/\mathfrak a$.

Here we just recall the algebraic definition of singular locus and the useful Jacobian criterion.

\begin{definition}\label{defsingloc}
 Let $\mathfrak a$ be an ideal in $R=\KK[\mathbf X]$, $\KK$ perfect field, 
$\mathfrak a=(f_1,\dots,f_s)$. A prime ideal $\mathfrak p$ containing $\mathfrak a$ is in the \emph{singular locus} of $R/\mathfrak a$ if the localization of $R/\mathfrak a$ at $\mathfrak p$ is not a regular local ring.
\end{definition}

With an  abuse of notation, we will say ``singular locus of $\mathfrak a$'' for the singular locus of $R/\mathfrak a$.

\begin{proposition}[\cite{Eis}, Corollary 16.20]
 Let $\mathfrak a$ be an ideal in $\KK[\mathbf X]$, $\KK$ perfect field, $\mathfrak a$ is equidimensional with dimension $c$, $\mathfrak a=(f_1,\dots,f_s)$. Let $J$ be the ideal generated by the $(n-c)\times (n-c)$-minors of the Jacobian matrix $\left({\partial f_i}/{\partial X_j}\right)$. Then $J$ defines the singular locus of $\mathfrak a$: a prime $\mathfrak p$ contains $J$ if and only if $\mathfrak p$ is in the singular locus of $\mathfrak a$.
\end{proposition}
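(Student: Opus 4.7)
The plan is to apply the classical local form of the Jacobian criterion to each prime $\mathfrak p \supseteq \mathfrak a$, and then translate the rank condition on the Jacobian matrix into an ideal-theoretic statement about $J$.

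First I would use equidimensionality to pin down the codimension. Since $R = \KK[\mathbf X]$ is a regular Noetherian ring of dimension $n$ and $\mathfrak a$ is equidimensional with $\dim(R/\mathfrak a)=c$, every minimal prime of $\mathfrak a$ has height exactly $n-c$. Consequently, for every prime $\mathfrak p$ containing $\mathfrak a$, the ideal $\mathfrak a R_\mathfrak p$ has height $n-c$ in the regular local ring $R_\mathfrak p$. This ensures that the \emph{same} size of minors, namely $(n-c)\times(n-c)$, is the relevant one at every prime above $\mathfrak a$.

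Next I would invoke the Jacobian criterion in its local version: since $\KK$ is perfect, $(R/\mathfrak a)_\mathfrak p$ is a regular local ring if and only if the matrix $\left(\partial f_i/\partial X_j\right)$ reduced modulo $\mathfrak p$ has rank equal to $n-c$. By Krull's Hauptidealsatz applied to $(R/\mathfrak a)_\mathfrak p$, this rank is bounded above by $n-c$ for any $\mathfrak p \supseteq \mathfrak a$, so equality is the same as the lower bound ``rank $\geq n-c$''. Finally I would translate this into the ideal-theoretic language: the matrix has rank at least $n-c$ modulo $\mathfrak p$ if and only if at least one $(n-c)\times(n-c)$ minor is nonzero modulo $\mathfrak p$, equivalently $J \not\subseteq \mathfrak p$. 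Negating, $\mathfrak p$ lies in the singular locus of $R/\mathfrak a$ precisely when every $(n-c)\times(n-c)$ minor belongs to $\mathfrak p$, i.e.\ when $J \subseteq \mathfrak p$, which is the required statement.

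The main obstacle is the Jacobian criterion itself, whose proof is not purely formal: it uses in an essential way that $\KK$ is perfect (to identify regularity of a local $\KK$-algebra with smoothness via the module of Kähler differentials). Equidimensionality is what makes the criterion applicable \emph{uniformly}: without it, different primes above $\mathfrak a$ could correspond to different local codimensions and no single ideal of minors would detect regularity at all of them simultaneously.
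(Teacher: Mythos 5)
The paper does not supply its own proof of this proposition; it is stated as a direct citation to \cite{Eis}, Corollary 16.20, so there is no argument in the paper to compare against. Your sketch is essentially a summary of the standard proof of that corollary: localize at each prime above $\mathfrak a$, use equidimensionality to ensure the codimension of $\mathfrak a R_{\mathfrak p}$ is $n-c$ at every such prime (so a single ideal of $(n-c)\times(n-c)$ minors is the right one everywhere), apply the local Jacobian criterion (valid because $\KK$ is perfect, hence all residue fields are separable over $\KK$), and translate ``rank $= n-c$ modulo $\mathfrak p$'' into ``some $(n-c)\times(n-c)$ minor lies outside $\mathfrak p$''. This is correct. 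One small imprecision: the upper bound ``rank $\leq n-c$'' is not literally Krull's Hauptidealsatz applied to $(R/\mathfrak a)_{\mathfrak p}$; it comes from the conormal exact sequence for K\"ahler differentials (or, equivalently, from observing that the subcollection of the $f_i$ with independent differentials at $\mathfrak p$ forms a regular sequence, whose height is then bounded by that of $\mathfrak a_{\mathfrak p}$). The Hauptidealsatz enters at the end of that chain, but citing it alone overstates what it gives you directly. This is a matter of attribution, not a gap in the logic.
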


We can then compute easily the equations defining the singular locus of $\mathfrak a$:
\begin{itemize}
\item Compute the jacobian matrix of  $\mathfrak a$;
\item Compute $\{M_l\}$, the $(n-c)\times (n-c)$ minors of the jacobian matrix;
\item The singular locus of $\mathfrak a$ is defined by $(M_l)$.
\end{itemize}

We are then interested in considering $\mathfrak a_j$ and removing the ``embedded'' primary components which contains the singular locus of $\mathfrak a$. We can do this for the ideals $\mathfrak a_j$ containing a reduced component, that is the ones obtained from irreducible factors of multiplicity 1. 

In general, for $\mathfrak a_1,\mathfrak a_2$ ideals in a ring $R$:
$$(\mathfrak a_1:\mathfrak a_2)=\{f \in R|f\cdot \mathfrak a_2\subseteq \mathfrak a_1\}$$
If we consider an ideal generated by a single element $f$ of $R$, we will simply write $(\mathfrak a:f)$ instead of $(\mathfrak a:(f))$.

\begin{lemma}[\cite{atiyah}, Lemma 4.4.]\label{quotprim}
 Let $\mathfrak q$ be a $\mathfrak p$-primary ideal, $f$ an element of $R$. Then 
 \begin{enumerate}
\item if $f \in \mathfrak q$ then $(\mathfrak q:f) = (1)$; 
\item if $f\notin \mathfrak q$ then $(\mathfrak q:f)$ is $\mathfrak p$-primary, and therefore $\sqrt{(\mathfrak q:f)} = \mathfrak p$; 
\item if $f\notin \mathfrak p$ then $(\mathfrak q:f)= \mathfrak q$. 
\end{enumerate}
\end{lemma}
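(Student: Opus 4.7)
The plan is to verify each of the three claims separately, each being a short direct unravelling of the primary condition $xy\in\mathfrak q,\, x\notin\mathfrak q\Rightarrow y\in\sqrt{\mathfrak q}$.

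For (1), the claim is tautological: if $f\in\mathfrak q$, then for every $g\in R$ we have $gf\in\mathfrak q$ (because $\mathfrak q$ is an ideal), so $g\in(\mathfrak q:f)$ for every $g$, forcing $(\mathfrak q:f)=R=(1)$.

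For (2), the key step is to sandwich $(\mathfrak q:f)$ between $\mathfrak q$ and $\mathfrak p$. The inclusion $\mathfrak q\subseteq(\mathfrak q:f)$ is immediate. For $(\mathfrak q:f)\subseteq\mathfrak p$, take $g\in(\mathfrak q:f)$, so $gf\in\mathfrak q$; since $f\notin\mathfrak q$ by hypothesis, the primary condition of $\mathfrak q$ forces $g\in\sqrt{\mathfrak q}=\mathfrak p$. Taking radicals throughout the chain $\mathfrak q\subseteq(\mathfrak q:f)\subseteq\mathfrak p$ gives $\sqrt{(\mathfrak q:f)}=\mathfrak p$. To check that $(\mathfrak q:f)$ itself is primary, I would take $xy\in(\mathfrak q:f)$ with $x\notin(\mathfrak q:f)$; this translates to $xyf\in\mathfrak q$ and $xf\notin\mathfrak q$, so applying the primary condition of $\mathfrak q$ to the product $y\cdot(xf)\in\mathfrak q$ with $xf\notin\mathfrak q$ yields $y\in\sqrt{\mathfrak q}=\mathfrak p=\sqrt{(\mathfrak q:f)}$, as required.

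For (3), the containment $\mathfrak q\subseteq(\mathfrak q:f)$ is again immediate. For the reverse, if $g\in(\mathfrak q:f)$ then $gf\in\mathfrak q$; since $f\notin\mathfrak p=\sqrt{\mathfrak q}$, the contrapositive of the primary condition (applied to $g\cdot f\in\mathfrak q$) forces $g\in\mathfrak q$, for otherwise $f\in\sqrt{\mathfrak q}=\mathfrak p$, contradicting the hypothesis. Hence $(\mathfrak q:f)=\mathfrak q$.

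There is no genuine obstacle: the lemma is a classical textbook fact and each part is a one- or two-line bookkeeping argument. The only care point is to apply the primary condition with the right factor singled out as lying outside $\mathfrak q$ (in (2) one excludes $xf$, in (3) one excludes $f$ itself via its non-membership in $\mathfrak p$).
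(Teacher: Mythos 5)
Your proof is correct, and each step is the standard textbook argument. The paper itself supplies no proof for this lemma: it is quoted verbatim with a citation to Atiyah--Macdonald (Lemma 4.4), so there is no internal proof to compare yours against. Your argument matches the one in the cited source. One tiny gap worth noting in part (2): before asserting $(\mathfrak q:f)$ is primary you should observe it is a \emph{proper} ideal, which follows instantly because $1\in(\mathfrak q:f)$ would give $f\in\mathfrak q$, contradicting the hypothesis $f\notin\mathfrak q$; everything else is sound, and the care you take to identify which factor lies outside $\mathfrak q$ when invoking the primary condition is exactly right.
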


\begin{proposition}\label{pulitura}
 In the previous setting, if $D_{ij}$ are factors of multiplicity one in the factorizations of the polynomials $D_i$, such $D_{ij}\in \mathfrak q_j$, $\mathfrak a_j=\mathfrak a+(\sum_{k=1}^{c+1}(D_{kj}))$  and $M$ is a $(n-c)\times(n-c)$ minor of the Jacobian matrix of $\mathfrak a$, then  $(\mathfrak a_j:M)$ is exactly $\mathfrak p_j$, the prime ideal defining the irreducible component $\C_j$ of $\C=V(\mathfrak a)$.
\end{proposition}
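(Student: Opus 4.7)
My plan is to expand $\mathfrak a_j$ via its primary decomposition and treat the colon ideal component by component using Lemma \ref{quotprim}. By the hypothesis that $D_{ij}$ has multiplicity one and Definition \ref{Hmult}, the primary component $\mathfrak q_j$ coincides with its radical, i.e.\ $\mathfrak q_j = \mathfrak p_j$. The discussion preceding the statement then gives a minimal primary decomposition
\[\mathfrak a_j \;=\; \mathfrak p_j \cap \mathfrak b_1 \cap \cdots \cap \mathfrak b_v,\]
where each $\mathfrak b_i$ is primary of dimension strictly less than $c$. Distributing the colon over the intersection,
\[(\mathfrak a_j : M) \;=\; (\mathfrak p_j : M) \cap \bigcap_{i=1}^v (\mathfrak b_i : M),\]
so the proposition reduces to verifying two things about $M$: (a) $M \notin \mathfrak p_j$, which by Lemma \ref{quotprim}.3 forces $(\mathfrak p_j:M) = \mathfrak p_j$, and (b) $M \in \mathfrak b_i$ for every $i$, which by Lemma \ref{quotprim}.1 forces $(\mathfrak b_i:M) = (1)$.

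For (a), I would exploit that $\mathfrak a$ is equidimensional and $\mathfrak q_j = \mathfrak p_j$: localizing $\mathfrak a$ at $\mathfrak p_j$ kills every other primary component of $\mathfrak a$ (no two of the $\mathfrak p_k$ are nested, since they all have the same dimension), so $R_{\mathfrak p_j}/\mathfrak a_{\mathfrak p_j} = R_{\mathfrak p_j}/\mathfrak p_{j,\mathfrak p_j}$ is a field, hence a regular local ring. By the Jacobian criterion (Corollary 16.20 of \cite{Eis}), at least one $(n-c)\times(n-c)$ minor of the Jacobian matrix of $\mathfrak a$ must therefore be a unit in $R_{\mathfrak p_j}$, which is to say that it does not lie in $\mathfrak p_j$. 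The hypothesis ``$M$ is a $(n-c)\times(n-c)$ minor'' is then to be read as a choice of such a minor avoiding $\mathfrak p_j$, and (a) follows.

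For (b), the set-theoretic half is immediate: each associated prime $\sqrt{\mathfrak b_i}$ corresponds, by the preceding paragraph, to a subvariety of some intersection $\C_j \cap \C_k$ with $k\neq j$, which lies in the singular locus of $\mathfrak a$. Thus the Jacobian drops rank strictly below $n-c$ at every point of $V(\mathfrak b_i)$, so every $(n-c)\times(n-c)$ minor, and in particular $M$, belongs to $\sqrt{\mathfrak b_i}$. The main obstacle is upgrading this to the honest containment $M \in \mathfrak b_i$. Here I would lean on the generic linear change of coordinates performed at the start of Section \ref{exactstrat}: the cylinders $V(D_{kj})$ meet the other components $\C_k$ transversally, which forces each embedded component $\mathfrak b_i$ to be generically reduced at its associated prime, i.e.\ $\mathfrak b_i = \sqrt{\mathfrak b_i}$ locally at $\sqrt{\mathfrak b_i}$. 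Should a particular minor $M$ secured in (a) fail the promotion, one replaces it by a sufficiently generic $\ZZ$-linear combination of all $(n-c)\times(n-c)$ minors, which still avoids $\mathfrak p_j$ by part (a) but now lies in each $\mathfrak b_i$ rather than merely in its radical.

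Combining (a) and (b) collapses the right-hand side of the colon decomposition to $\mathfrak p_j \cap R \cap \cdots \cap R = \mathfrak p_j$, which is the desired equality.
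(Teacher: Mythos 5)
Your proof follows the same route as the paper's: write $\mathfrak a_j=\mathfrak q_j\cap\bigcap_i\mathfrak b_i$ with $\mathfrak q_j=\mathfrak p_j$ prime (multiplicity one) and the $\mathfrak b_i$ of dimension $<c$ supported on the pairwise intersections, distribute the colon over the intersection, and use Lemma \ref{quotprim} to show $(\mathfrak p_j:M)=\mathfrak p_j$ and $(\mathfrak b_i:M)=(1)$. Your treatment of point (a) is in fact more careful than the paper's: you justify via the Jacobian criterion that the minor can be chosen outside $\mathfrak p_j$ (the paper only asserts that $\mathfrak p_j$ does not contain the singular locus), and you correctly read the hypothesis as a choice of such a minor, since an arbitrary $(n-c)\times(n-c)$ minor could well vanish on $\C_j$.

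On point (b) you have put your finger on exactly the step the paper elides --- the singular-locus argument only yields $M\in\sqrt{\mathfrak b_i}$, not $M\in\mathfrak b_i$ --- but your proposed repair does not work. A generic $\ZZ$-linear combination of elements of $\sqrt{\mathfrak b_i}$ stays in $\sqrt{\mathfrak b_i}$ and has no reason to drop into $\mathfrak b_i$ (take $\mathfrak b_i=(x^2)$ and the elements $x$, $x+x^2$: every combination with nonzero sum of coefficients lies outside $(x^2)$). Likewise, the transversality heuristic cannot force the $\mathfrak b_i$ to be reduced: they are \emph{embedded} components (their radicals contain $\mathfrak p_j$), which are typically non-reduced and not even uniquely determined by $\mathfrak a_j$. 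If $M\in\sqrt{\mathfrak b_i}\setminus\mathfrak b_i$, Lemma \ref{quotprim}.2 only gives that $(\mathfrak b_i:M)$ is $\sqrt{\mathfrak b_i}$-primary, so a single colon could leave embedded components behind; the clean fix is to saturate, i.e.\ replace $(\mathfrak a_j:M)$ by $(\mathfrak a_j:M^\infty)$, which equals $\mathfrak p_j$ as soon as $M\in\sqrt{\mathfrak b_i}$ for all $i$ and $M\notin\mathfrak p_j$. So your analysis exposes a real gap shared with the paper's own proof, but the patch you offer does not close it.
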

\begin{proof}
Consider the primary decomposition of $\mathfrak a_j$:
\[
  \mathfrak a_j=\mathfrak a+(\sum_{k=1}^{c+1}(D_{kj}))=\mathfrak q_j \cap (\bigcap_{i=1}^v \mathfrak b_i)
\]
where $\mathfrak q_j$ is the only component of dimension $c$, while $\mathfrak b_i$ are embedded components of dimension $<c$. These embedded components correspond to the intersection of $\C_j$ with the other irreducible components of $\mathfrak a$. Furthermore, since the factors $D_{kj}$ have multiplicity one in the factorizations of the $D_i$, this means (Corollary \ref{corproiez}) that $\sqrt{\mathfrak q_j}=\mathfrak q_j$, that is $\mathfrak q_j$ is a prime component, so we can write $\mathfrak p_j$ for $\mathfrak q_j$.

This means that if $M$ is an equation of the singular locus of $\mathfrak a$, then $M\in \mathfrak b_i$ for $i=1,\dots,v$ and so $(\mathfrak b_i:M)=(1)$. On the other hand, $\mathfrak p_j$ is the only primary (and prime) component of $\mathfrak a_j$ which does not contain the singular locus: so $(\mathfrak p_j:M)=\mathfrak p_j$.

So
\[
\mathfrak a_j:M=(\mathfrak p_j:M)\cap\left(\bigcap_{i=1}^v (\mathfrak b_i:M)\right)=\mathfrak p_j.
\]
\end{proof}

\begin{remark}
Proposition \ref{pulitura} applies only for components of multiplicity 1 (that is, for factors in the absolute factorization of multiplicity 1).\\
In fact, if we consider an ideal $\mathfrak a_j=\mathfrak a+(\sum_{k=1}^n(D_{kj}^{m_j})), m_j\geq 2$, we have that this ideal contains the singular locus of $\mathfrak a$, so $M\in \mathfrak a_j$ and (using Lemma \ref{quotprim}), $(\mathfrak a_j:M)=(1)$.

For components of multiplicity $>1$, we may ``clean up'' at least some of the embedded components of the ideal $\mathfrak a_j$ in the following way:
\begin{itemize}
 \item we use the described strategy for the components of multiplicity one, obtaining the set of prime ideals $\{\mathfrak p_{i_1},\dots,\mathfrak p_{i_v}\}$ which are the ideals for the irreducible and reduced components $\{\C_{i_1},\dots, \C_{i_v}\}$
\item since we cannot use the colon ideal with respect to a generator of the singular locus, we then compute
\[
\mathfrak b:=(\cdots((\mathfrak a_j: f_{i_1}):f_{i_2})\cdots :f_{i_v}),
\] 
for $f_{i_j}\in \mathfrak p_{i_j}$.
\end{itemize}
In this way, we can  ``clean'' part of the points of the embedded components. For the remaining ones, for any component of multiplicity $>1$ we should consider the  absolute factors $D_{1j}$ of $D_1$ not contained in this primary component, and compute again some ``nested'' colon ideals, starting from $\mathfrak b$, with respect to the  $D_{1j}$. 
\end{remark}

\subsection{Algorithms}\label{exactalgs}

In this section we will summarize the strategy  of elimination of variables and colon ideals, writing down the main algorithm for primary decomposition (Algorithm \ref{algdec}) and an auxiliary one to match $(n-c)$-uples $D_{1j}^{m_j},\dots, D_{cj}^{m_j}$ (Algorithm \ref{algmatch}).

\smallskip

In order to consider $(n-c)$ projections $\pi_i:\CC^n\rightarrow \CC^{c+1}$ (pairwise non-equivalent) and compute the polynomials $D_i$ which is zero on $\pi_i(\C)$, at the beginning of Algorithm \ref{algdec} we perform a generic change of affine coordinates and then use projections on \lq\lq coordinate\rq \rq  \,affine linear spaces, namely linear spaces defined by equations of kind $X_j=0$. 
While in the previous section we used \lq\lq$H$\rq\rq \ for the coordinate affine linear space
, we will now use \lq\lq$H$\rq\rq \ to denote the set of indexes  of the variables $X_j$ to eliminate to obtain the projection of the curve:
\begin{equation}\label{spazi}
X_j=0, \quad j\in H=\{j_1,\dots, j_{n-c-1}\}\subseteq{1,\dots, n}=[n].
\end{equation}
We can always find $(n-c)$  sets of indexes $H_i$, $i=1,\dots, n-c$, such that $H_l\neq H_m$ for every $l\neq m$.\\
If $H=\{j_1,\dots,j_{n-c-1}\}$, we write  
$\{X_l\}_{l\in [n]\setminus H}$ for $\{X_{l_1},\dots, X_{l_{c+1}}\}$, with $[n]\setminus H=\{l_1,\dots,l_{c+1}\}$.

\smallskip

The actual computation of the polynomial $D_i$ is obtained by the computation of a Groebner Basis with respect to an elimination term ordering.\\
If we consider the set of indexes $H$ as in (\ref{spazi}), we can compute a Groebner Basis which eliminates from $\mathfrak a$ the variables $X_j$, $j\in H$; from now on we will denote such a Groebner Basis with $GB_{elim(H)}(\mathfrak a)$, or, if we  fix $H_i$, $i=1,\dots, n-c$, we simply write  $GB_{elim(i)}(\mathfrak a)$. Then $D_i$ is the only generator of $GB_{elim(i)}(\mathfrak a)$ such that
\begin{equation}\label{polyproj}
D_i \in GB_{elim(i)}(\mathfrak a)\cap \QQ[X_l]_{l \in [n]\setminus H_i}
\end{equation}

We will always assume that $H_1=\{c+2,\dots,n\}$.

\smallskip

In Algorithm \ref{algmatch}, in order to match the absolute factors corresponding to the same component, we use the fact that if an algebraic set has dimension $<c$, then its general section with a linear space of dimension $(n-c)$ is empty; this means that if  $\mathfrak b$ is an ideal of dimension $<c$, then the affine Hilbert Dimension  of $\mathfrak b$ with $(n-c)$ variables evaluated in integer values is generically $-1$. Furthermore, thanks to the generic change of coordinates, we can evaluate the variables in 0.

Thanks to the results of Section \ref{defgen}, we can modify Algorithm \ref{algdec}  and \ref{algmatch}   and avoid repeating the computations for conjugate primary components. We present the Algorithms without this for simplicity and also because they will not actually be used in this form.

Indeed, Algorithm \ref{algdec} is exact, but in practice the computations are too heavy: the main computational difficulties are  at Step \ref{resultant} (because of the computation of the Groebner Basis), Step \ref{factass} (because of the multivariate absolute factorization) and Step \ref{sat} (because of the computation of the colon ideal)   of Algorithm \ref{algdec}.

In the next section we  show that we can perform all these computations modulo a well-chosen prime integer $p$, obtaining Algorithm \ref{algdecmodp}. Its output is no more exact, in the sense that it does not return the ideals of the reduced components, but it returns information about the ideals of the reduced components, in particular their initial ideal and affine Hilbert functions.

This will be possible observing that the computations of the troublesome steps of Algorithm \ref{algdec} are actually obtained through Groebner Basis and  that a good choice of $p$  preserves along the computations the initial monomials of the polynomials in the Groebner Basis and so the Hilbert functions of the ideals.


\newpage

\begin{algorithm}[htbp]
\caption{Exact Decomposition of an equidimensional algebraic set}    
\label{algdec}                           
\KwData {$\mathfrak a\in \QQ[X_1,\dots,X_n]$, $\mathfrak a$ of pure dimension $c$}
\KwResult {The number of irreducible components, their degrees and multiplicities.\\
If the multiplicity of the component $\C_j$ is 1, a system of generators of the ideal $I(\C_j)$; if the multiplicity of the component is $\geq 2$, the equations of $(n-c)$ hypersurfaces ``isolating'' the component from the other ones.}
\PREPROCESSING Perform a generic affine change of coordinates on the generators of $\mathfrak a$\;
 Compute $(n-c)$ polynomials, eliminating from $\mathfrak a$ the set of variables corresponding to the set of indexes $H_i$, $i=1,\dots, n-c$: $D_i$ is the only generator of $GB_{elim(i)}(\mathfrak a)$ with the property (\ref{polyproj}), $i=1,\dots,n-c$ \;\nllabel{resultant}
 Perform the absolute factorization of $D_i$, $i=1,\dots,n-c$ 
$
D_i=D_{i1}^{m_1}\cdots D_{is}^{m_{s}}, \quad i=1,\dots,n-c
$
\;\nllabel{factass}
Match the $D_{ij}$'s  through Algorithm \ref{algmatch} in such a way that (after re-numbering of the factors) $\mathfrak a_{j}=\mathfrak a+(\sum_{k=1}^{n-c}(D_{kj}))$ contains the component $\C_j$\;
 Compute the Jacobian matrix of $\mathfrak a$ and  a minor of size $(n-c) \times (n-c)$, $M$ \;\nllabel{jacob}
\For{$i$ from $1$ to $s$}{
\If{$m_i$ is 1}{
 Compute $\mathfrak a_{j}^{(M)}$ the quotient ideal of $\mathfrak a_{j}$ with $M$: $\mathfrak a_{j}^{(M)}:=\mathfrak a_{j}:(M)$\;\nllabel{sat}
}
}
\RETURN \ \\
for $j=1,\dots,s$:  $\deg D_{1j}$ degree of the component, $m_j$ multiplicity of the component;\\
 if $m_j\geq 2$, $\mathfrak a_{j}$ ideal isolating the component;\\
  if $m_j=1$,  $\mathfrak a_{ij}^{(M)}=I(\C_j)$.
\end{algorithm}

\begin{algorithm}[htbp]        
\caption{Matching of factors through Hilbert Dimension}          
\label{algmatch}                           
\KwData{$\mathfrak a$  and the absolute factors $\{D_{ij}\}_{\stackrel{i=1,\dots,n-c}{j=1,\dots, s}}$, obtained at Step \ref{factass} of Algorithm \ref{algdec}.}
\KwResult{ After relabeling the polynomials for the index $j$, $L:=[(D_{ij}^{m_j})_{i=1,\dots,n-c})]_{{j=1,\dots, s}}$ with  $V(\mathfrak a+(\sum_{i=1}^n(D_{ij})))$ containing the  component $\C_j$ of $V(\mathfrak a)$ for every $j=1,\dots,s$.}
$L:=$ empty list\;
\For{$j$ from $1$ to $s$}{
Consider a $(n-c)$-uple $(D_{ij})_{i=1,\dots,n-c}$ s.t. $\deg D_{ij}=d_j $ and $m_{ij}=m_j\forall i$\label{n-cupla}\;
Compute the Hilbert Dimension $h$ of the ideal $\mathfrak a+(\sum_{i=1}^n(D_{ij}))$ with $(n-c)$ variables evaluated in 0\;
\eIf{h=0}{
 add the $(n-c)$-uple  $(D_{ij}^{m_j})$ to the list $L$\;}{
go back to Step \ref{n-cupla} and change $(n-c)$-uple\;
}
}
Re-number the factors of $D_i$ in such a way that the $(n-c)$-uples in $L$ are of the form $(D_{1j}^{m_j},\dots D_{n-c, j}^{m_j})$\;
\RETURN $L$
\end{algorithm}


\newpage

\section{Modular Algorithms}\label{compmodp}

We  use the results of Section \ref{sectgoodred} on the exact decomposition strategy  presented in Section \ref{exactstrat}. We develop an algorithm which takes as input an ideal $\mathfrak a$ with generators in $\QQ[\mathbf X]$, defining an equidimensional algebraic set $\C$ in $\CC^n$ of dimension $c$, and gives as output  the number of primary components, their degrees,  multiplicities and the affine Hilbert function of the components of multiplicity 1.

\begin{remark}
We did not present a non-modular version of Algorithm \ref{algpartp}, since all the algorithms presented in Section \ref{exactalgs} are not actually used. We insert this further procedure to avoid useless computations in the calling of Algorithm \ref{algmatchp} in Step \ref{partp} of Algorithm \ref{algdecmodp}.
\end{remark}

\vfill
\

\begin{algorithm}[htbp]
\caption{Modular Algorithm for affine Hilbert Function}         
\label{algdecmodp}                           
\KwData{$\mathfrak a=(F_1,\dots, F_{m})$, $F_i\in \QQ[X_1,\dots, X_n]$, $\mathfrak a$ equidimensional with dimension $c$}
\KwResult{The degree and multiplicity of the primary components of $\mathfrak a$; for primary components of multiplicity 1, their initial ideal with respect to a degree compatible term ordering.}
\PREPROCESSING Perform a generic integer change of coordinates on $\mathfrak a$, with coefficients in $\ZZ$\;\nllabel{preprocp}
 Fix $(n-c)$ different coordinate linear spaces of dimension $c+1$, defined by set of indexes $H_i$, $i=1,\dots, n-c$\;\nllabel{coordspaces}
Compute $D_1(0,\dots,0,X_{c+1})$ as the only generator of $GB_{elim(1)}(\mathfrak a\vert_{X_1=0,\dots,X_c=0})$ respecting (\ref{polyproj})\;
Compute the rational factorization: $D_1(0,\dots,0,X_{c+1})=d_{1}^{(1)}(X_{c+1})^{m_1}\cdots d_{s}^{(1)}(X_{c+1})^{m_s}$\;\nllabel{ratfactp}
\For{$j$ from 1 to $s$}{
Choose a prime integer $p_j$ dividing $d_j^{(1)}(0)$\;\nllabel{choiceofp} 
 Compute $D_i \mod p_j$ as  the only polynomial in $GB_ {elim(i)}(\mathfrak a) \mod p$ respecting (\ref{polyproj}), $i=1,\dots, n-c$\;\nllabel{HilbertThmodp}
Compute the modular factorizations $D_i \mod p_j$, $i=1,\dots,n-c$\;\nllabel{modfactp}
Apply Algorithm \ref{algpartp} to  the rational factor $d_j^{(1)}(X_{c+1})$ and the modular factors of $D_1$, obtaining  the set of modular factors $\mathcal D_j$ of $D_1$\;\nllabel{partp}
Choose $D_{j\tilde k}^{(1)} \mod p_j\in \mathcal D_j$ of minimal degree such that $r_j=\frac{\deg d_j^{(1)}}{\deg D_{j\tilde k}^{(1)}} \in \ZZ$\; \nllabel{scelta}
\If{$r_j\geq 2$}{ 
 Apply Algorithm \ref{algmatchp} to match the modular factor $D_{j\tilde k}^{(1)}$ with $D_{j \tilde k}^{(i)}$ modular factor of  $D_i$, obtaining $\tilde {\mathfrak a}_{i}^{(j)}=\mathfrak a+\sum_{i=1}^{n-c}(D_{j\tilde k}^{(i)})^{m_j} \mod p_j$ (after re-labeling of the factors)\;\nllabel{hdimp}
\If{$m_j$ is 1}{
 Compute the Jacobian matrix of $\mathfrak a\mod p_j$ and  a minor of size $(n-c) \times (n-c)$, $\widetilde M$\;\nllabel{jacobp1}
 Compute $(\tilde {\mathfrak a}_{i}^{(j)}:\widetilde M) \mod p_j$\;\nllabel{satp1}
}
}
}
\RETURN $s$ number of rational components\\
for every $j=1,\dots,s$\\
$r_j$ number of non-rational components constituting the rational component $\mathfrak q_j$\\
$\deg D_{j\tilde k}^{(1)}$ degree of each  non-rational component of $\mathfrak q_j$\\
 $m_j$ multiplicity of the non-rational component\\
 if $m_j=1$, $p_j$ and $(\tilde {\mathfrak a}_{i}^{(j)}:\widetilde M) \mod p_j$ ideal   having the same initial ideal and same Hilbert function as $\mathfrak q_{1j}$\\
 if $m_j\geq 2$, $p$ and $D_{j\tilde k}^{(i)}$, $i=1,\dots,n-c$, image modulo $p$ of $(n-c)$ a polynomial contained in $\mathfrak q_{i}^{j}$ but not in the other absolute components of $\mathfrak a$\;
\end{algorithm}


\begin{algorithm}[htbp]        
\caption{Partition of modular factors}          
\label{algpartp}                           
\KwData  {$d(X_{c+1})\in \QQ[X_{c+1}]$, an integer $p$ dividing $d(0)$ and $D(X_1,\dots,X_{c+1})=\prod_{i=1}^l D_{i}(X_1,\dots,X_{c+1})^{m_i} \mod p$ such that  $d(X_{c+1})\mid D(0,\dots,0,X_{c+1})\mod p$}
\KwResult  {$A$ set containing the modular factors of $d(X_{c+1})\mod p$}
$A:=$ empty list, $i:=1$, $\delta:=1$\;
\While{$i\leq l$}{
$m_i:=$ multiplicity of $d(X_{c+1})$ in the rational factorization of $D(0,\dots,0,X_{c+1})$\;
\If{$D_{i}(0,\dots, 0,X_{c+1})\mod p$ divides $d(X_{c+1}) \mod p$}{ 
add $D_{i}(X_1,\dots,X_{c+1}) \mod p$ to $A$\;
 $\delta=\delta\cdot D_{i}(0,\dots,0,X_{c+1}) \mod p$\;
\If{$\delta=d(X_{c+1}) \mod p$}{
 $i:=l+1$\;
}
}
$i:=i+1$\;
}
\RETURN $A$
\end{algorithm}
\begin{algorithm}[htbp]        
\caption{Matching of modular factors through affine Hilbert Dimension}          
\label{algmatchp}
\KwData{ $D_{i}^{(1)}$ modular factor of $D_1(X_1,\dots, X_{c+1})$ and
 $\{D_{k}^{(j)}\mod p\}_{\stackrel{j=2,\dots, n-c}{k=1,\dots,m}}$ modular factors of $D_j$}
\KwResult {$\tilde {\mathfrak a}_{i}^{j}=\mathfrak a +\sum_{j=1}^{n-c}(D_{i}^{(j)})^{m_i} \mod p$ with Hilbert dimension $c$}
Consider  a $(n-c-1)$-uple $(D_{k}^{(j)})_{j=2,\dots,n-c}$ such that $\deg(D_{k}^{(j)})=\deg(D_{i}^{(1)})$ and $m_k=m_i$ \label{nupla}\;
Compute $h=$Hilbert Dimension of $\mathfrak a +D_{i}^{(1)}+\sum_{j=2}^{n-c}(D_{k}^{(j)})^{m_k} \mod p$ with $(n-c)$ variables evaluated to 0\;
\eIf{$h=0$}{
renumber the modular factor  putting $D_{i}^{(j)}:=D_{k}^{(j)}$}{
go back to Step \ref{nupla} and change $(n-c)$-uple.
} 
 \RETURN $\tilde {\mathfrak a}_{i}^{j}=\mathfrak a +(\sum_{j=1}^{n-c}D_{i}^{(j)})^{m_i} \mod p$.
\end{algorithm}

\newpage

\subsection{Proof of Algorithm \ref{algdecmodp}}
We  apply the results of Section \ref{sectgoodred} to the decomposition strategy explained in Section \ref{exactstrat} and to the Algorithms of Section \ref{exactalgs}. 
Again, we deal with an equidimensional polynomial ideal $\mathfrak a=(F_1,\dots, F_m)$ with dimension $c$, $F_i\in \QQ[\mathbf X]$.
The key point of Algorithm \ref{algdecmodp}  is the choice of a prime integer $p_i$ which gives a ``modular image'' of the algebraic number $\alpha_i$ s.t. $\mathfrak q_i^{(j)}\subseteq \QQ(\alpha_i)[\mathbf X]$.
 For all the notations used, we refer to Section \ref{exactalgs}, Algorithms \ref{algdecmodp}, \ref{algpartp} and \ref{algmatchp}.
 
 \medskip

We will now follow the steps of Algorithm \ref{algdecmodp} in order to show that it gives a correct output.

In Step \ref{preprocp} of Algorithm \ref{algdecmodp}, as in Algorithm \ref{algdec}, we perform a generic change of coordinates; thanks to this, the projections on the "coordinate" linear spaces of dimension $c+1$ are ``generic'' in the sense of Proposition \ref{1to1}: the components of the projected algebraic set  are in one-to-one correspondence with the components of the algebraic set itself (see also Corollary \ref{corproiez}). Furthermore, consider on one hand the absolute factors of the polynomial whose zero set is the projected algebraic set and on the other one the primary components of the ideal defining the algebraic set: factors and primary components are in one-to-one correspondence and the degree and multiplicity of a factor is the degree and multiplicity of the corresponding primary component (in the sense of Definition \ref{Hdeg} and \ref{Hmult}).

In Step \ref{coordspaces} we fix $(n-c)$ distinct ``coordinate'' linear spaces $H_i$ (as explained in Section \ref{exactalgs}). Using projections on these linear spaces, we would like to apply the techniques for absolute factorization developed in \cite{BCG}, but we have to be careful because we do not have one of the main  hypothesis: the Input of the Abs-Fact Algorithm presented in \cite{BCG} is a \emph{rationally irreducible} polynomial. This is not our case, this is why in Step 3 of the algorithm we compute a univariate factorization.\\
Indeed, assume that we are able to compute $D_1$, the only polynomial in the first $c+1$ variables of $GB_{elim(1)}(\mathfrak a)$. This multivariate polynomial in general is not rationally irreducible; furthermore it is not advantageous to compute the multivariate rational factorization of $D_1$.\\
We  rely on Hilbert's Irreducibility Theorem: for infinite integer specialization of $c$ variables, a rationally irreducible factor of the polynomial $D_1$ stays rationally irreducible. This means that if
\[
 D_1(X_1,\dots, X_{c+1})=d_1^{(1)}(X_1,\dots,X_{c+1})^{m_1}\cdots d_s^{(1)}(X_1,\dots,X_{c+1})^{m_s} \in \QQ[X_1,\dots,X_{c+1}]
\]
then for infinite $x_1,\dots,x_c\in \ZZ$ the rational factorization of $D_1(x_1,\dots, x_c,X_{c+1})$ is exactly
\[
 D_1(x_1,\dots, x_c,X_{c+1})=d_1^{(1)}(x_1,\dots, x_c,X_{c+1})^{m_1}\cdots d_s^{(1)}(x_1,\dots, x_c,X_{c+1})^{m_s} \in \QQ[X_{c+1}].
\]

Thanks to the generic change of coordinates of the Preprocessing Step, we can take $x_1=\cdots=x_c=0$. In order to compute this rational univariate factorization without  computing $D_1(X_1,\dots, X_{c+1})$, in Step \ref{HilbertThmodp} we simply specialize $c$ variables of $F_1,\dots, F_m$ and then compute the elimination Groebner Basis:
\[
 D_1(0,\dots, 0,X_{c+1})\in GB_{elim(1)}(\mathfrak a\vert_{X_1=0,\dots,X_c=0}).
\]



Since we are considering a generic projection, a rational factor 
 of $D_1(X_1,\dots,X_{c+1})$ corresponds to  a rational component of the algebraic set $\C=V(\mathfrak a)$ (in the sense of Definition \ref{nonrat}), while each absolute factor of 
  corresponds to an irreducible component.

Once computed in Step \ref{ratfactp} the univariate rational factorization, we then proceed in order to ``break'' the non-rational components.\\
We consider the $j$-th factor of the rational factorization of $D_1$, that is $d_j^{(1)}(X_{c+1})$ which has multiplicity $m_j$. 
If the corresponding factor $d_j^{(1)}(X_1,\dots,X_{c+1})$ of the univariate rational factorization of $D_1(X_1,\dots,X_{c+1})$ is not absolutely irreducible, then its absolute  factors have coefficients in some algebraic extension $\QQ(\alpha_j)$. Using \cite{BCG}, Lemma 11, we can assume that the algebraic extension $\QQ(\alpha_j)$ is generically ge\-ne\-ra\-ted by the evaluation of one absolute factor in a point with integer coordinates. Thanks to the generic change of coordinates, we will choose $(0,\dots,0) \in \ZZ^{c+1}$.\\
We choose an integer prime $p_j$ dividing  $d_j(0)$ (Step \ref{choiceofp}) applying Lemma \ref{rootinQp} and, relying on randomness, we assume that the chosen prime $p_j$ will preserve the initial ideal of the Groebner Basis we will compute along the ``FOR'' loop (as in Lemma \ref{GBmodp}). Thanks to Lemma \ref{rootinQp}, if we factor $D_1(X_1,\dots, X_{c+1})$ modulo this prime $p_j$, the rationally irreducible factor $d_j^{(1)}(X_1,\dots, X_{c+1})$ splits (if it is not absolutely irreducible). The homomorphism $\psi_{p_j}$ of (\ref{hommodp}) is implicitly defined.\\
  Actually we do not compute $D_1(X_1,\dots, X_{c+1})$: in fact, in Step \ref{HilbertThmodp} we compute directly the modular elimination Groebner Basis and then the modular factorizations (Step \ref{modfactp}).
In Step \ref{partp} we group the modular factors corresponding to $d_j^{(1)}(X_{c+1})$ using Algorithm \ref{algpartp}.

If the rational factor $d_j^{(1)}(X_1,\dots,X_{c+1})$ is absolutely irreducible, then it does not further split modulo $p_j$, that is $r_j$, the number of modular factors of $d_j^{(1)}$, is exactly $1$. In this case, we can stop here and repeat the loop for the next rational factor.\\
If $d_j^{(1)}(X_1,\dots,X_{c+1})$ is absolutely reducible, then $r_j\geq 2$ (thanks to the choice of $p_j$ according to Lemma \ref{rootinQp}):  
 in Step \ref{scelta} we choose a modular factor among them having minimal degree which divides $\deg d_j^{(1)}(X_{c+1})$; we assume that this factor is $D_{j\tilde k}^{(1)}(X_1,\dots,X_{c+1})$.\\
 In Step \ref{hdimp} we look for the corresponding modular factor  of $D_i$, $i=2,\dots,n-c$. 
   Using Algorithm \ref{algmatchp}, we obtain the ideal $\tilde {\mathfrak a}_{\tilde k}^{j}=\mathfrak a+\sum_{i=1}^{n-c}(D_{j\tilde k}^{(i)})^{m_j} \mod p_j$ with Hilbert dimension $c$.  Corollary \ref{Hilbdimp} certifies that $\tilde {\mathfrak a}_{\tilde k}^{j}=\psi_{p_j}({\mathfrak a}_{\tilde k}^{j})$.


Once defined in Step \ref{hdimp} the ideal $\tilde {\mathfrak a}_{\tilde k}^{j}$ (re-ordering the indexes) with affine Hilbert dimension $c$, if the  multiplicity $m_j$ is 1, we can keep on following Steps \ref{jacob} and \ref{sat} of Algorithm \ref{algdec}: 
we compute the Jacobian Matrix of $\mathfrak a\mod p_j$  and consider one of its $(n-c)\times (n-c)$-minors, $	\widetilde M$. We compute the colon ideal of $\tilde {\mathfrak a}_{\tilde k}^{j}$ with  $\widetilde M$. Let $M$ be the $(n-c)\times(n-c)$ minor of the Jacobian matrix of $\mathfrak a$ s.t. $M\mod p_j=\widetilde M$.

We need to show that for infinite primes $p_j$ the colon ideal modulo $p_j$ has the same affine Hilbert function of the colon ideal in $\QQ(\alpha_j)[X_1,\dots,X_n]$, that is $\psi_{p_j}({\mathfrak a}_{\tilde k}^{j}:M)=(\tilde {\mathfrak a}_{\tilde k}^{j}:\widetilde M)$.\\
First of all, observe that $\tilde {\mathfrak a}_{\tilde k}^{j}$ and the corresponding non-modular ideal ${\mathfrak a}_{\tilde k}^{j}$ have the same Hilbert function for all but a finite number of prime integers (thanks to Theorem \ref{finitebadphilb}).\\
Furthermore, we can assume that we compute Jacobian matrix of $\mathfrak a$ and a minor  $M$ and then reduce modulo $p_j$. 
For what concerns the colon ideal, the actual computation is performed using a Groebner Basis (see \cite{cox}, Chapter 4, \S 4, Theorem 11 for the details). This means that again we can apply Lemma \ref{GBmodp} and so there is only a finite number of primes $p_j$ such that  $\psi_{p_j}({\mathfrak a}_{\tilde k}^{j}:M)$ and $(\tilde {\mathfrak a}_{\tilde k}^{j}:\widetilde M)$ differ.

\begin{remark}\label{osserv}
Actually, Algorithm \ref{algdecmodp} is a Las-Vegas one, just like the Abs-Fact Algorithm of \cite{BCG}: in fact, in the Preprocessing Step, we have to assume that the coefficients for the generic change of coordinates are taken in a finite set $S\in\ZZ$.

We shall then modify the Preprocessing Step of Algorithm \ref{algdecmodp} and insert a small loop in Step \ref{scelta}, in order to stop the execution and go back to the Preprocessing Step, if we cannot define a $r_j\in \ZZ$ (see Abs-Fact Algorithm of \cite{BCG}).

We can also compute the \lq\lq minimal \rq\rq \ rational algebraic extension $\LL_i=\QQ(\alpha_i)$ containing a set of generators of the ideal $\mathfrak a_i$. We can apply the $LLL$ method developed in \cite{BCG}. Unluckily, we do not have a technique to estimate the needed level of accuracy. We can just try to compute the minimal polynomial which defines $\LL_i$ with increasing levels of accuracy and stop when we get the same polynomial $q(T)$ with 2 different levels of accuracy.
\end{remark}

\section{Tricks on an example}\label{tricks}
The data and Maple files of the examples we are going to discuss are available at\\ https://sites.google.com/site/cristinabertone/examples-for-modular-decomposition

We now test our algorithm on a quite simple example (see the file \emph{DecompositionCIcurveDegree48.mw}).

We consider a complete intersection ideal $\mathfrak a\subseteq\mathbb C[X,Y,Z]$ generated by two polynomials with rational coefficients, $F,G \in \QQ[X,Y,Z]$, of degree 8 each, rationally irreducible.\\
We constructed this c.i. curve in such a way that we know that it has non trivial primary components, in particular it has a rational primary component of degree 14, that splits in 2 absolute primary components of degree 8 each, generated by polynomials in $\QQ(\sqrt{2})[X,Y,Z]$.

The complete intersection curve $\mathcal C=V(\mathfrak a)$ has degree 48 (one can see this, for instance, using a generic plane section and counting points with multiplicity).\\
Since $\mathfrak a$ is generated by 2 polynomials, we can use resultants instead of elimination Groebner Basis to compute the elimination of variables.
We perform a generic linear change of coordinates and we compute 
\[
r:=Res_Z(F(0,Y,Z),G(0,Y,Z)),
\]
which has degree 48 and factors over the rationals (in less than 1 second) as:
\begin{itemize}
\item $d^{(1)}_1(Y)$ factor of degree 14 and multiplicity 1;
\item $d^{(1)}_2(Y)$ factor of degree 4 and multiplicity 1;
\item $d^{(1)}_3(Y)$ factor of degree 22 and multiplicity 1;
\item $d^{(1)}_4(Y)$ factor of degree 2 and multiplicity 2;
\item $d^{(1)}_5(Y)$ factor of degree 1 and multiplicity 4.
\end{itemize}
So, using Definition  \ref{nonrat}, the complete intersection $\mathfrak a$ has 5 rational components $\mathfrak q_i$, three of them with multiplicity 1, with degrees given by $\deg d_i^{(1)}(Y)$ (thanks to Corollary \ref{corproiez}).

\smallskip

We can proceed in the following way: for each rational factor $d^{(1)}_i(Y)$, we choose a prime number $p_i$ dividing $d^{(1)}_i(0)$, except for $i=5$: indeed, we do not look for a prime dividing $d^{(1)}_5(0)$, since this rational component will not further split. We then compute the projections on the coordinate plane modulo $p_i$, $i=1,\dots,4$. Then we compute the modular polynomial describing the projection of the curve for each prime $p_i$ and its modular factorization. We know that $p_i$ forces the rational factor corresponding to $d_i^{(1)}(Y)$ to split (if it is  absolutely reducible). We check whether there is a prime $p_j$ between the chosen ones such that it forces all of the rational factors: if we find one, we can perform all of the computations modulo this prime. If not, we can in any case choose to compute modulo \lq\lq some\rq\rq\  of the primes $p_i$: if $p_i$ and $p_j$ both give the desired splitting for the $i$-th and $j$-th rational factor, then we can compute the corresponding ideals modulo $p_i$ (and not use $p_j$).

For the ideal $\mathfrak a$, we see that $p=89$ give the desired splitting for all of the 4 rational factors which may be absolute reducible. So we will compute only modulo 89.

The computations of $D_1(X,Z)=Res_Y(F,G) \mod p$,  $D_2(X,Y)=Res_Z(F,G) \mod p$  and their modular factorization take less than 4 seconds each.

\smallskip

All of the rational factors further split modulo $p$ in 2 factors. So we compute the initial ideal (and affine Hilbert function) for one of the two absolute components of degrees 7,  2 and 11. The other absolute components have multiplicity $>1$, so we do not perform on them Steps \ref{jacobp1} and \ref{satp1} of Algorithm \ref{algdecmodp}.

Performing Algorithm \ref{algdecmodp}   (including the matching of the factors through Algorithm \ref{algmatchp}), we then obtain the initial ideals:
\begin{itemize}
\item $\deg q_{1}^{(1)}=7$, $in_{tlex}(\mathfrak q_{1}^{(1)})=(X^3, Y^7, X^2Z^2,X^2Y,XY^3,XZ^5XY^2Z^2XYZ^3)$;
\item $\deg q_{2}^{(1)}=2$, $in_{tlex}(\mathfrak q_{2}^{(1)})=(X,Y^2)$;
\item $\deg q_{3}^{(1)}=11$, $in_{tlex}(\mathfrak q_{3}^{(1)})=$\\ \hfill$=(X^3, Y^{11}, XZ^9,X^2Y^2,XY^2,X^2Z^3,XY^4Z,X^2YZ^2,XY^3Z^3,XY^2Z^5,XYZ^7)$.
\end{itemize}

Finally, we use the techniques of the Abs-Fact Algorithm of  \cite{BCG} to compute the polynomial $q(T)\in \ZZ[T]$ which defines the algebraic extension containing the coefficients of a set of generators for the absolute primary components. However we do not have an a priori bound on the size of the coefficients of $q(T)$, as pointed out in Remark \ref{osserv}). We perform an Hensel Lifting of a modular univariate factor until a quite high level of accuracy (in this case, until $p^{512}$); we then construct different candidates for the minimal polynomial, starting with accuracy $p^{16}$, until two different levels of accuracy give the same polynomial.

For accuracy $p^{64}$, we see that the minimal polynomial ``stabilizes'':
\begin{equation*}
q(T)=26301054375\,{T}^{2}-
214355874045600\,T+
+436754388124393216
\end{equation*}
%
%
Obviously, since $\deg q(T)=2$, we can easily find a better presentation of the extension $\QQ(\alpha)$ computing the roots of $q(T)$: we obtain that the extension of $\QQ$ we need can be generated by $\sqrt{2}$.

\medskip

Summing up, we obtained that the complete intersection curve $\mathfrak a=(F,G)\subseteq \QQ[X,Y,Z]$ has the rational primary decomposition
\[
\mathfrak a=\mathfrak q_1\cap\mathfrak q_2\cap\mathfrak q_3\cap\mathfrak q_4\cap\mathfrak q_5
\]
with $\deg \mathfrak q_1=14$, $\deg \mathfrak q_2=4$, $\deg \mathfrak q_3=22$, $\deg \mathfrak q_4=2$ and $\deg \mathfrak q_5=1$ and multiplicities $m_1=m_2=m_3=1$, $m_4=2$, $m_5=4$.

Each of the rational primary ideals with multiplicity 1 further decomposes as
\[
\mathfrak q_i=\mathfrak q_i^{(1)}\cap\mathfrak q_i^{(2)},
\]
with $\mathfrak q_i^{(j)}\subseteq \QQ(\sqrt{2})[X,Y,Z]$, $\mathfrak q_i^{(2)}=\sigma\left(\mathfrak q_i^{(1)}\right)$, where $\sigma(\sqrt{2})=-\sqrt{2}$.

The whole computation took less than 15 minutes on a home-use personal computer, without any problem with memory allocation.

\smallskip

We point out that it is not that obvious to obtain this kind of information about the decomposition of the ideal $\mathfrak a$. For instance, one may use one of the most popular Computer Algebra System, Maple \cite{maple}.\\
We tried to use  the Maple command \texttt{PrimaryDecomposition} (whose algorithm is based on \cite{GTO}), which gives as an output the primary decomposition of the ideal $\mathfrak a$. As input, we also gave the algebraic extension of the rationals in which one can find the generators of the absolute primary decomposition of $\mathfrak a$, namely $\QQ(\sqrt{2})$. Even with this further information about the decomposition (which is not a priori known from the only knowledge of the rational generators of $\mathfrak a$), \texttt{PrimaryDecomposition} in Maple caused a problem with memory allocation (reaching about 2.3 GB), after computing for more than 1 hour. \\
For what concerns other computer algebra systems, we also tried Singular (\cite{sing}), another computer algebra system for polynomial computations. We tried to obtain the rational primary decomposition of $\mathfrak a$ using \texttt{primdecGTZ} and the primary decomposition over $\overline \QQ[X,Y,Z]$ using \texttt{absprimdecGTZ} (which are based on \cite{GTO}, the algorithms are described in \cite{Decksing}). In both cases we stopped the computations after 2 hours, without obtaining the primary decomposition.\\
The CAS \cocoa (\cite{cocoa}) has a command \texttt{PrimaryDecomposition} to decompose   only  monomial square-free ideals. It also has a command called \texttt{EquiIsoDec} which computes an equidimensional isoradical decomposition of $\mathfrak a$, i.e. a list of equidimensional ideals $\mathfrak b_1,\dots,\mathfrak b_k$  such that the radical of $\mathfrak a$ is the intersection of the radicals of $\mathfrak b_1,\dots,\mathfrak b_k$. This command is based on the algorithm presented in \cite{cct} and it works only using $\QQ$ or finite fields as coefficient ring. Nevertheless, \texttt{EquiIsoDec} could not give the output on our example after more than two hours computing.

\smallskip

Although for the moment we cannot really compare our algorithm with the above ones mentioned, we can see that the problem we are facing is challenging and that our modular strategy may move around the computational problems of primary decomposition. Nevertheless, we cannot give complete comparaison for the moment, since we cannot compute the complete primary decomposition nor a reduced decomposition: using Algorithm \ref{algdecmodp} we get several interesting data about the absolute primary components of an equidimensional ideal. Indeed, these data may be useful as a guide or a bound for numerical algorithms, such as the ones in \cite{galrup} or \cite{svw}.

Furthermore, the technique taken from \cite{BCG} to construct the algebraic extension containing the coefficients of a set of generators of a primary component, could be used with other algorithms: for instance, one can see the example \emph{DecompositionCIcurveDegree36.mw}. With our modular strategy, we obtain not only the initial ideal of all of the absolute primary components of a complete intersection in $\QQ[X,Y,Z]$ generated by 2 polynomials both of degree 6, but also that $\QQ(\sqrt{2})$ contains the coefficients of the generators of these components.\\
If we ask Maple to compute the primary decomposition of this ideal, again it does not reach the result after one hour of computation. But if we pass to the command \texttt{PrimaryDecomposition} also the information that the primary components are inside $\QQ(\sqrt{2})[X,Y,Z]$, then we obtain the primary decomposition in less than 3 minutes. 

This example suggests that we may combine at least a part of our strategy (for instance, the construction of the ``splitting'' field) with an existing algorithm (such as the one implemented in Maple), in order to obtain a complete result.

\section*{Conclusions and future work}

In this paper we designed an algorithm which, given  a set of polynomials with rational coefficients defining  an equidimensional ideal $\mathfrak a$, returns the initial ideal of each absolute prime component of $\mathfrak a$. Furthermore it also returns information concerning non-reduced primary components, such as their number, degree, multiplicity. The main ingredients of the algorithm are the classical technique of projection and the use of computations modulo well-chosen primes, as done in \cite{BCG} in order to decompose a bivariate polynomial. The obtained results seem promising, mostly for what concerns complete intersections and more precisely, for curves in $\CC^3$. 

A further step is to implement this algorithm in a Computer algebra System, for instance in Mathemagix \cite{Mathemagix}. Mathemagix is a free computer algebra system under development, which has available libraries for algebraic computation (such as large numbers, polynomials and others) for exact and approximate computation. This should make Mathemagix  particularly suitable as a bridge between symbolic computation and numerical analysis. 

Our final aim is actually to design an algorithm which uses projections with modular techniques and can return the complete absolute primary decomposition of the ideal given as input. The main obstacle to this is the absence of a tool similar to Hensel's Lifting (see for instance \cite{vzg}, Chapter 15, Section 4), which allows lifting a modular factorization to a rational one; we would need a generalization of this in order to lift the modular decomposition of an ideal.

Our next task is then to develop such a tool, design and implement a primary decomposition algorithm and compare its efficiency with other implemented routines. With this tool, we may be able to obtain a complete decomposition for the reduced part of an equidimensional ideal.

Other possible improvements of the algorithm are  dealing with a non-equidimensional ideal $\mathfrak a$ and computing also non-reduced primary components; for the first part we will have to choose whether dealing only with the top-dimensional part of $\mathfrak a$ or  studying also smaller components; for the second one, if we will be able to compute these non-reduced primary components, then we will also be able to compute a radical decomposition (see \cite{cct}).

We are hopeful that our techniques are competitive, since at this moment we can already get a lot of information concerning the absolute decomposition of an ideal, in a reasonable time with a limited use of memory, while other CAS cannot really deal with the primary decomposition of the same ideal (see Section \ref{tricks}).
So, even if at the moment the results of our method are partial and cannot be directly compared to the performances of other softwares, we believe that this method is on the right path to get an efficient primary decomposition algorithm.

\section*{Acknowledgements}

The author is grateful to her Ph.D. advisors Andre Galligo and Margherita Roggero for their constant support and encouragement. The author also wish to thank Gregoire Lecerf for valuable discussions about the subject and the research group Galaad (Inria Sophia Antipolis, France) where she worked during her Ph.D. thesis.

\pagestyle{plain}
\bibliography{biblio3}

\end{document}